\newtheorem{thm}{Theorem}[section]
\newtheorem{fact}[]{Fact}
\newtheorem{claim}[]{Claim}
\newtheorem{assm}[]{Assumption}
\newtheorem{lem}[thm]{Lemma}
\DeclareMathOperator{\ex}{ex}
\newcommand{\p}{\mathcal{P}}
\begin{document}
\begin{sloppypar}


\title{Planar Tur\'an number of quasi-double stars}
\author{{{\small\bf Huiqing  LIU}\thanks{Partially supported by
NNSFC (Nos. 12371345 and 12471325); email: hqliu@hubu.edu.cn;}}\quad{{\small\bf Tian XIE}\thanks{Partially supported by
NNSFC (No. 12371345); email: 13087308856@163.com;}}\quad{{\small\bf Qin ZHAO}\thanks{Partially supported by
NNSFC (No. 12371345) and the Department of Science and Technology of Hubei Province (No.2022CFB871); email: qzhao72@163.com.}}\\ {\small Hubei Key Laboratory of Applied Mathematics,} \\ {\small Faculty of Mathematics and Statistics, Hubei University, Wuhan 430062, China.}\\{\small Key Laboratory of Intelligent Sensing System and Security (Hubei University), } \\ {\small Ministry of Education.}}

\date{}
\maketitle
 \baselineskip17.5pt

\begin{abstract}

\vskip 0.2cm

Given a graph $H$, we call a graph \textit{$H$-free} if it does not contain $H$ as a subgraph.
The \emph{planar Tur\'an number} of a graph $H$, denoted by $\ex_{\p}(n, H)$, is the maximum number of edges in a planar $H$-free graph on $n$ vertices.
A $(h,k)$-quasi-double star $W_{h,k}$, obtained from a path $P_3=v_1v_2v_3$ by adding $h$ leaves and $k$ leaves to the vertices $v_1$ and $v_3$, respectively, is a subclass of caterpillars. In this paper, we study  $\ex_{\mathcal{P}}(n,W_{h,k})$ for all $1\le h\le 2\le k\le 5$, and obtain some tight bounds $\ex_{\mathcal{P}}(n,W_{h,k})\leq\frac{3(h+k)}{h+k+2}n$ for $3\le h+k\le 5$ with equality holds if $(h+k+2)\mid n$, and $\ex_{\mathcal{P}}(n,W_{1,5})\le \frac{5}{2}n$ with equality holds if $12| n$. Also we show that $\frac{9}{4}n\le \ex_{\mathcal{P}}(n,W_{2,4})\le \frac{5}{2}n$ and $\frac{5}{2}n\le\ex_{\mathcal{P}}(n,W_{2,5})\le \frac{17}{6}n$, respectively.
\vskip 0.2cm
			
{\bf Keywords:}  Tur\'an number, planar graph, quasi-double star
			
\end{abstract}
		
\section{Introduction}

All graphs $G=(V(G),E(G))$ considered here are simple and finite, where $V(G)$ and $E(G)$ are vertex and edge set of $G$, respectively.  For two vertices $u, v\in V(G)$, the \textit{distance} between $u$ and $v$ is denoted by $\mathrm{dist}_G(u,v)$. Let $N_k(v)=\{ u\in V(G)\ | \ \mathrm{dist}_G(u,v)=k\}$. For simplicity, we use the notation $N(v)$ instead of $N_1(v)$, and set $d(v)=|N(v)|$, $N[v]=N(v)\cup \{v\}$. We denote by $\delta(G)$ and $\Delta(G)$ the \textit{minimum degree} and \textit{maximum degree} of $G$, respectively. A vertex of degree one is called a \textit{leaf}. For $S\subseteq V(G)$, the subgraph induced by $S$ is denoted by $G[S]$. Let $G-S=G[V(G)\backslash S]$. If $S=\{v\}$, then we simplify $G-\{v\}$ to $G-v$. For any $S$, $T\subset V(G)$, we use $E[S,T]$ to denote the edges with one end in $S$ and the other in $T$. Let $e(G)=|E(G)|$ and $e[S,T]=|E[S,T]|$.

By $ C_n, P_n, K_{s,n-s}$ we denote a cycle, a path, a complete bipartite graph on $n$ vertices, respectively. A vertex of degree $n-1$ in a star $K_{1,n-1}$ is called its \textit{center}.
A $p$-$q$ edge is an edge whose two end vertices are of degrees $p$ and $q$, respectively.
A $p$-$l$-$q$-path is a path $P_3$ whose vertices are of degrees $p$, $l$ and $q$, respectively.
A $p$-$q$ star with center $x$ is a star $K_{1,p}$ with center $x$ whose $p$ leaves all are of degree $q$.

Given a graph $H$, we call a graph \textit{$H$-free} if it does not contain $H$ as a subgraph.
The \emph{planar Tur\'an number} of a graph $H$, denoted by $\ex_{\p}(n, H)$, is the maximum number of edges
in a planar $H$-free graph on $n$ vertices. In 2016, Dowden  \cite{dowden2016extremal}  obtained a tight bound  $\ex_{\mathcal{P}}(n,C_{4})\le \frac{15}{7}(n-2)$ for $n\ge 4$ and $\ex_{\mathcal{P}}(n,C_{5})\le \frac{12}{5}n-\frac{33}{5}$ for all $n\ge 11$. In 2022, Ghosh {\em et al.} \cite{ghosh2022planar} determined the bound  $\ex_{\mathcal{P}}(n,C_{6})\leq \frac{5}{2}n-7$ for all $n\geq 18$. Later, Shi, Ruilin and Walsh \cite{shi2025planar}, and independently Gy{\H{o}}ri, Li and Zhou \cite{gyHori2023planar} showed that $\ex_{\mathcal{P}}(n,C_{7})\le\frac{18}{7}n-\frac{48}{7}$.


\emph{Caterpillars} are an important subclass of trees, first studied by Harary and Schwenk \cite{harary1973number}, in which the removal of all leaves makes it a path. Let $P_l(s_1,s_2,\dots,s_l)$ be a caterpillar obtained from $P_l:=v_1v_2\cdots v_l$ by adding $s_i$ leaves to the vertex $v_i$ for $1\le i\le l$. Denote by $S_{h,k}:=P_2(h,k)$ \emph{a double star}, and by $W_{h,k}:=P_3(h,0,k)$ \emph{a quasi-double star}. If $u_i\in N(v_1)\setminus\{v_2\}$ for $1\le i\le h$ and $w_j\in N(v_3)\setminus\{v_2\}$ for $1\le j\le k$, then we write $(u_1u_2\dots u_h)v_1$-$v_2$-$v_3(w_1w_2\dots w_k)$ for $W_{h,k}$, see Figure~\ref{fig-smn}.

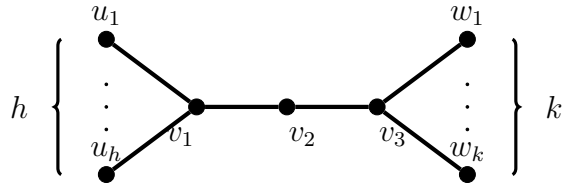
\begin{figure}[htbp]
\centering

\begin{tikzpicture}[scale=0.6]
\tikzstyle{vertex}=[draw,circle,fill=black,minimum size=6,inner sep=0]		
		
\node[vertex] (u_1) at (-4,1.5)[label={[yshift=-2pt] $u_1$}] {}circle(15pt);
\node[vertex] (u_h) at
(-4,-1.5)[label={[yshift=-2pt] $u_h$}] {};
\node[vertex] (w_1) at (4,1.5)[label={[yshift=-2pt] $w_1$}] {};
\node[vertex] (w_k) at (4,-1.5)[label={[yshift=-2pt] $w_k$}] {};
\node[vertex] (u) at (-2,0)[label={[xshift=-6pt, yshift=-22pt] $v_1$}] {};
\node[vertex] (v) at (0,0)[label={[xshift=6pt, yshift=-22pt] $v_2$}] {};
\node[vertex] (w) at (2,0)[label={[xshift=6pt, yshift=-22pt] $v_3$}] {};

\draw[line width=1.2pt,decorate, decoration={brace}] (-5,-1.5) -- (-5,1.5)
node[midway, left=8pt] {$h$};	
\draw[line width=1.2pt,decorate, decoration={brace,mirror}] (5,-1.5) -- (5,1.5)
node[midway, right=8pt] {$k$};	
\draw[fill=black] (-4,0.5) circle (0.8pt);
\draw[fill=black] (-4,-0.5) circle (0.8pt);
\draw[fill=black] (-4,0) circle (0.8pt);
\draw[fill=black] (4,0.5) circle (0.8pt);
\draw[fill=black] (4,-0.5) circle (0.8pt);
\draw[fill=black] (4,0) circle(0.8pt);		
\draw[ultra thick] (u) -- (u_1);
\draw[ultra thick] (u) -- (u_h);
\draw[ultra thick] (w) -- (w_1);
\draw[ultra thick] (w) -- (w_k);
\draw[ultra thick] (u) -- (v);
\draw[ultra thick] (w) -- (v);
	\end{tikzpicture}	
\caption{Quasi-double star $W_{h,k}$}\label{fig-smn}
\end{figure}
 Recently, Ghosh {\em et al.} \cite{ghosh2021planar}  considered $\ex_{\p}(n, H)$ for $H$ being double stars.

\begin{thm}\cite{ghosh2021planar}
Estimates on the planar Tur\'an number of double stars are as follows:

\begin{enumerate}
    \item For $n\geq 16$, $\ex_{\p}(n,S_{2,2})= 2n-4$.
    \item For $n\geq 1$, $\ex_{\mathcal{P}}(n,S_{2,3})=2n.$
    \item For $n\geq 1$, $\frac{15}{7}n \leq \ex_{\mathcal{P}}(n,S_{2,4})\leq\frac{8}{3}n.$
    \item For $n\geq 1$, $\frac{5}{2}n \leq \ex_{\mathcal{P}}(n,S_{2,5})\leq\frac{17}{6}n.$
    \item For $n\geq 3$, $\frac{5}{2}n-5\leq\ex_{\mathcal{P}}(n,S_{3,3})\leq\frac{5}{2}n-2.$
    \item For $n\geq 1$, $\frac{9}{4}n\leq \ex_{\mathcal{P}}(n,S_{3,4})\leq\frac{17}{6}n.$
\end{enumerate}
\end{thm}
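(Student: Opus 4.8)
The plan is to handle all six estimates through a single framework: first reformulate double-star-freeness as a local constraint on the two endpoint degrees of each edge, then construct extremal examples for the lower bounds, and run a discharging argument on a fixed plane embedding for the upper bounds. The reformulation I would record first is the \emph{edge-degree characterization}. Writing $c_{uv}=|N(u)\cap N(v)|$, a copy of $S_{h,k}$ (say $h\le k$) rooted at an edge $uv$ exists precisely when one can pick $h$ private neighbours at one end and $k$ at the other, all distinct; a short Hall/counting argument gives that this is possible iff, in one of the two orientations of $uv$, we have $d(u)\ge h+1$, $d(v)\ge k+1$ and $d(u)+d(v)-c_{uv}\ge h+k+2$. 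Thus $G$ is $S_{h,k}$-free iff every edge violates one of these three inequalities. The key consequence is that if $v$ is a potential ``big'' centre ($d(v)\ge k+1$), then each neighbour $u$ must either have small degree or share almost all of its neighbourhood with $v$ via triangles on $uv$.

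For the lower bounds I would exhibit planar $S_{h,k}$-free graphs of the right density and take disjoint copies. For the clean-density cases this uses vertex-transitive polytope graphs: disjoint octahedra $K_{2,2,2}$ ($6$-vertex, $4$-regular, density $2$) are $S_{2,3}$-free and realise $2n$; disjoint icosahedra ($12$-vertex, $5$-regular, density $\tfrac52$) have maximum degree $5<6=k+1$ and so are $S_{2,5}$-free, realising $\tfrac52 n$; and $K_{2,n-2}$ (degree-$2$ large side) is $S_{2,2}$-free with $2n-4$ edges, with analogous variants covering the remaining residues of $n$. For the fractional lower bounds $\tfrac{15}{7}n$ (for $S_{2,4}$), $\tfrac52 n-5$ (for $S_{3,3}$) and $\tfrac94 n$ (for $S_{3,4}$) I would design a periodic planar gadget of exactly the target density and verify $S_{h,k}$-freeness by checking the edge-degree condition on the finitely many edge-orbits of the gadget before pasting copies together.

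For the upper bounds I would use discharging on a plane embedding, with charges tuned to the target density $c$: assigning initial charge $d(v)-2c$ to each vertex makes the total $\sum_v(d(v)-2c)=2e-2cn$, so nonpositivity of the total charge is \emph{exactly} the desired bound $e\le cn$, and in the triangle-rich cases it is cleaner to spread charge across faces as well (via $\ell(f)$, the length of $f$, and Euler's formula $n-e+f=2$). The forbidden double star supplies the discharging rule: a vertex of degree $\ge k+1$ is permitted to pull charge from its neighbours because, by the characterization, each such neighbour is forced either to have low degree (hence to carry surplus charge) or to sit on many triangles through the big vertex. I would then show that after redistribution every vertex (and face) has nonpositive charge, whence $e\le cn$, sharpening the constant case by case to $\tfrac83$, $\tfrac{17}{6}$, $\tfrac52-2$, and so on.

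The main obstacle is precisely the common-neighbour term $c_{uv}$: an edge can lie in arbitrarily many triangles, since a book $K_{2,m}+uv$ is planar, and this lets a high-degree vertex evade the degree restriction while still offloading charge. The heart of the argument is therefore to prove that such triangle-dense configurations cannot pervade the graph at high density — books and their close relatives have density tending to $2$ — so the discharging can only balance up to the stated density. Controlling these near-books, together with the borderline vertices of degree exactly $k$ or $k+1$, is where the case analysis concentrates, and it is exactly this control that separates the two exact cases ($S_{2,2}$ and $S_{2,3}$) from the four cases where only matching bounds are obtained.
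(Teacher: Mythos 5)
First, note that the paper does not actually prove this statement: Theorem~1.1 is quoted from the cited reference as background, so there is no in-paper proof to compare yours against; the only fair comparison is with the techniques the paper uses for the analogous quasi-double-star bounds. Judged on its own, your proposal has one genuinely solid component: the edge-degree characterization is correct (an edge $uv$ hosts a copy of $S_{h,k}$ iff, for one of the two orientations, $d(u)\ge h+1$, $d(v)\ge k+1$ and $d(u)+d(v)-2-|N(u)\cap N(v)|\ge h+k$), and it is the right local reformulation --- essentially Lemma~\ref{obs-xy} of the present paper in contrapositive form. Your lower-bound constructions for $S_{2,2}$, $S_{2,3}$ and $S_{2,5}$ are correct, and the ``finitely many edge-orbits'' check would indeed certify suitable gadgets for the remaining cases (e.g.\ the pentagonal bipyramid gives $\tfrac{15}{7}n$ for $S_{2,4}$, and the square antiprism with both quadrilateral faces triangulated gives $\tfrac{9}{4}n$ for $S_{3,4}$), though you never exhibit them; and the $\tfrac{5}{2}n-5$ construction for $S_{3,3}$ is genuinely nontrivial, since the icosahedron fails there (a $5$-$5$ edge lying on exactly two triangles already carries an $S_{3,3}$).

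The genuine gap is the entire upper-bound half, which is the substance of the theorem. You describe a discharging framework --- initial charge $d(v)-2c$, big vertices pulling charge from neighbours --- but you state no discharging rule, verify no final charges, and you yourself isolate the step on which everything hinges (planar books $K_{2,m}+uv$, where a high-degree vertex satisfies the common-neighbour escape clause on every incident edge) without resolving it. As written this is a strategy, not a proof; for six different targets ($2n-4$, $2n$, $\tfrac{8}{3}n$, $\tfrac{17}{6}n$, $\tfrac{5}{2}n-2$, $\tfrac{17}{6}n$) the rule design and final-charge verification is exactly where all the work lies, and nothing in your outline shows the books cannot be glued together at density above the target. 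It is also worth noting that the method actually used here (and in the source) is quite different: induction on $n$ after deleting low-degree vertices, combined with structural lemmas showing that a vertex of degree at least $h+k+1$ forces its closed neighbourhood to be a small separate component whose edges can be counted directly (Lemmas~\ref{obs-s1k}, \ref{obs-xyz}, \ref{C-edge}, \ref{obs-k+2}); that route avoids the book problem entirely by isolating dense pieces rather than discharging across them. In its current state the hardest two thirds of the theorem remain unproved.
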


Later, Xu and Shao~\cite{xu2025planar} improved the upper bound  $\ex_{\mathcal{P}}(n,S_{2,4})\le\frac{31}{14}n$ for $n\ge1$. Xu, Hu and Zhang~\cite{xu2024improved} showed that  $\ex_{\mathcal{P}}(n,S_{2,5})\le\frac{19}{7}n-\frac{18}{7}$ for $n\ge1$. Xu, Zhou, Li and Yan~\cite{xu2024planarbds} gave exact value of $\ex_{\mathcal{P}}(n,S_{3,3})=\lfloor\frac{5}{2}n\rfloor-5$ for $n\ge10$. Xu, Zhang and Shao~\cite{xu2024planarsS34} showed that  $\ex_{\mathcal{P}}(n,S_{3,4})\le\frac{5}{2}n$ for $n\ge1$. Moreover, Liu and Xu~\cite{liu2025upper} established an upper bound for $\ex_{\mathcal{P}}(n,S_{3,5})\le\frac{23}{8}n-3$ for $n\ge2$. For more results on planar Tur\'an number, we refer to~\cite{fang2024extremal}, \cite{fang2022planar}, \cite{guan2024planar}, \cite{lan2019planar}, \cite{lan2024planar}.\medskip

In this paper, we determine $\ex_{\p}(n, H)$ for $H$ being quasi-double stars $W_{h,k}$. Our main results are as follows.

\begin{thm}\label{thm-h+k=6}
	Let $G$ be a planar $W_{h,k}$-free graph on $n$ vertices, where $1\le h\le2\le k\le 5$.
\begin{enumerate}

     \item If $3\le h+k\le 5$, then $\ex_{\mathcal{P}}(n,W_{h,k})\le \frac{3(h+k)}{h+k+2}n$ and equality holds if $(h+k+2) | n$.

    \item If $h+k=6$, then $\frac{9}{4}n\le \ex_{\mathcal{P}}(n,W_{h,k})\le \frac{5}{2}n$. Moreover, right equality holds if $12 | n$ and $h=1,~k=5$.

    \item $\frac{5}{2}n\le \ex_{\mathcal{P}}(n,W_{2,5})\le \frac{17}{6}n$.
\end{enumerate}	
	
\end{thm}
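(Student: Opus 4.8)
\medskip
\noindent\textbf{Lower bounds (constructions).} For every pair under consideration the lower bound comes from a vertex-disjoint union of $W_{h,k}$-free triangulations. For part~1 and for the bound $\frac94 n$ in part~2 I take $\frac{n}{h+k+2}$ disjoint copies of an arbitrary maximal planar graph on $h+k+2$ vertices. Each copy has $3(h+k+2)-6=3(h+k)$ edges and, having only $h+k+2<h+k+3=|V(W_{h,k})|$ vertices while $W_{h,k}$ is connected, contains no copy of $W_{h,k}$; summing gives exactly $\frac{3(h+k)}{h+k+2}\,n$ edges (which is $\frac94 n$ when $h+k=6$), valid when $(h+k+2)\mid n$. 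For the sharp bound $\frac52 n$ in part~2 with $(h,k)=(1,5)$ and for the lower bound in part~3 I instead take $\frac{n}{12}$ disjoint icosahedra: the icosahedron is a $5$-regular planar triangulation on $12$ vertices with $30$ edges, and since its maximum degree $5$ is strictly less than $k+1=6$ no vertex can play the degree-$(k+1)$ endpoint of $W_{h,k}$ when $k=5$, so it is $W_{1,5}$- and $W_{2,5}$-free and yields $\frac{30}{12}n=\frac52 n$ edges. Values of $n$ not divisible by the relevant modulus are handled by appending one further small free block.

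\medskip
\noindent\textbf{Upper bounds: set-up and the forbidden configuration.} Since each bound is linear in $n$ and $W_{h,k}$ is connected, I reduce to bounding $e(G)$ on a single connected component (small components trivially satisfy the bound), so I may take $G$ to be a connected plane graph; the only components attaining equality are the $(h+k+2)$-vertex triangulations (and, for $k=5$, the icosahedron), so the task is to show that any larger or denser region forces $W_{h,k}$. The combinatorial engine is a Hall-type criterion: for a path $v_1v_2v_3$, writing $A=N(v_1)\setminus\{v_2,v_3\}$ and $B=N(v_3)\setminus\{v_1,v_2\}$, the graph contains $W_{h,k}$ centred on this path exactly when $|A|\ge h$, $|B|\ge k$ and $|A\cup B|\ge h+k$ (or the same with $h,k$ swapped). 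Hence $W_{h,k}$-freeness forbids any $P_3$ whose endpoints have large degree and few common neighbours; planarity bounds how many common neighbours two nearby vertices can have, so in practice a $P_3$ with $d(v_1)\ge h+1$ and $d(v_3)\ge k+1$ is already dangerous. The structural consequence I would extract is that every vertex of degree $\ge k+1$ must be surrounded by low-degree vertices, which is precisely what prevents a large triangulated patch.

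\medskip
\noindent\textbf{Upper bounds: discharging.} I would run a discharging argument on the identity $\sum_f(\ell(f)-3)=3n-6-e$ for a connected plane graph, so that proving $e\le\frac{3m}{m+2}n$ with $m=h+k$ is equivalent to proving the total face excess $\sum_f(\ell(f)-3)\ge\frac{6}{m+2}n-6$. Assigning each vertex charge $d(v)-\frac{6m}{m+2}$ and each face charge $\ell(f)-3$, I redistribute charge from large faces and low-degree vertices toward high-degree vertices, using the forbidden configuration to guarantee that every vertex of degree $\ge k+1$ is incident to enough non-triangular faces (equivalently, has enough small-degree neighbours) to cancel its positive charge. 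The local balance is a finite verification carried out separately for each admissible pair $(h,k)$ with $1\le h\le2\le k$, and the bounds $\frac52 n$ (part~2) and $\frac{17}{6}n$ (part~3) come from the same scheme with the discharging tuned to target average degrees $5$ and $\frac{17}{3}$ respectively.

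\medskip
\noindent\textbf{Main obstacle.} The delicate point is tightness: because the extremal graphs are exact triangulations on $h+k+2$ vertices (and the icosahedron), the discharging has essentially no slack, so the rules must be calibrated to the specific $(h,k)$ and must correctly treat the boundary cases in which the endpoints $v_1,v_3$ of a $P_3$ are adjacent or share several neighbours. Controlling these overlaps---showing that whenever a high-degree vertex fails to create $W_{h,k}$ it must pay for it through incident large faces---while keeping the case analysis uniform across the five pairs of part~1 and the remaining pairs of parts~2--3 is where the real effort lies; the gaps between $\frac94 n$ and $\frac52 n$ for $W_{2,4}$ and between $\frac52 n$ and $\frac{17}{6}n$ for $W_{2,5}$ mark exactly the configurations where this calibration cannot yet be made tight.
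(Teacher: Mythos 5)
Your lower-bound constructions are exactly the paper's: disjoint maximal planar graphs on $h+k+2$ vertices for part~1 and the $\frac94 n$ bound, and disjoint copies of the $5$-regular triangulation on $12$ vertices (the icosahedron) for the $\frac52 n$ bounds when $k=5$; the justifications you give (too few vertices, respectively maximum degree $5<6$) are the ones the paper uses, so that half is fine.

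The upper bounds, however, are not proved. Your Hall-type criterion for when a $P_3$ extends to a $W_{h,k}$ is correct, but everything after it is a plan rather than an argument: no discharging rules are stated, the ``finite verification carried out separately for each admissible pair'' is precisely the entire content of the theorem and is never performed, and your own closing paragraph concedes that the calibration ``is where the real effort lies'' and in some cases ``cannot yet be made tight.'' There are also signs the scheme has not been thought through: with vertex charge $d(v)-\frac{6m}{m+2}$ the positive charge sits on the high-degree vertices, so they must \emph{send} charge away, whereas you propose to push charge \emph{toward} them; and your guiding structural principle (``every vertex of degree $\ge k+1$ is surrounded by low-degree vertices'') is much weaker than what is actually needed. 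The paper's proof is not a discharging argument at all: it is an induction on $n$ driven by disconnection lemmas --- if $\delta(G)\ge h+1$ and $d(x)\ge h+k+1$ then $G[N[x]]$ is an entire component (Lemma~\ref{obs-s1k}), and if $d(x)\ge h+k$ then $N(y)\subseteq N(x)$ for every $y\in N_2(x)$, so $G[N[x]\cup N_2(x)]$ is a component (Lemma~\ref{obs-xyz}). These let one excise a small, explicitly bounded component (via Claims~\ref{S-com} and \ref{S-3path} and the various Facts about $(h+k)$-$(h+k)$ edges, $(h+k)$-$l$-$(h+k)$ paths and $(h+k)$-$(h+k-1)$ stars), apply induction to the rest, and finish with a degree count such as $n_5\le n_3$ obtained from an injective map. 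To salvage your approach you would have to either prove disconnection statements of this strength and feed them into explicit discharging rules verified case by case, or abandon discharging for the inductive excision the paper uses; as written, the upper-bound half of the theorem is missing.
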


\section{Preliminaries}
In this section, we first reveal some local structure of a planar $W_{h,k}$-free graph.

\begin{lem}\label{thm-edge}\cite{bondy2008graph}
Let $G$ be a simple planar graph on $n$ ($n\ge 3$) vertices, then $e(G)\le 3n-6$. Moreover, if $G$ is bipartite, then $e(G)\le 2n-4$.
\end{lem}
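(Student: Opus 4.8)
The plan is to derive both inequalities from Euler's polyhedral formula $n-e+f=2$, valid for any connected plane graph with $f$ faces, via a standard face-counting argument. First I would reduce to the connected case: given an arbitrary simple planar $G$ that is disconnected, I can add edges joining distinct components (drawn inside a common face of a fixed plane embedding) without destroying planarity, so $G$ is a spanning subgraph of some connected planar $G'$ on the same vertex set with $e(G')\ge e(G)$. Hence it suffices to bound $e$ for connected graphs, and the same reduction respects bipartiteness, since when merging two components one may always add an edge between opposite colour classes.

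For the first bound, I would fix a plane drawing of the connected graph and use the key estimate $3f\le 2e$: summing the boundary lengths of all faces counts each edge exactly twice, and, after adding edges to make $G$ edge-maximal so that every face is bounded by a triangle when $n\ge 3$, each face boundary has length at least $3$. Substituting $f\le \tfrac{2e}{3}$ into Euler's formula gives $n-e+\tfrac{2e}{3}\ge 2$, that is $e\le 3n-6$; since we only added edges, the bound descends to the original graph. For the bipartite refinement the only change is that $G$ has no odd cycle, so every face boundary has even length at least $4$, whence $4f\le 2e$ and $f\le \tfrac{e}{2}$, and Euler's formula yields $n-e+\tfrac{e}{2}\ge 2$, i.e.\ $e\le 2n-4$.

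The one point requiring care is the face-degree lower bound, which can fail for graphs with bridges or for forests, where a single edge may border the same face on both sides. I would circumvent this precisely by passing to the edge-maximal planar (respectively edge-maximal bipartite planar) supergraph, in which every face is genuinely bounded by a cycle of the required length, and I would note separately that sparse graphs such as trees satisfy $e=n-1\le 3n-6$ (respectively $\le 2n-4$) outright for $n\ge 3$. I expect this reduction-to-maximal step to be the main technical obstacle, as it is the step that legitimizes the clean inequalities $3f\le 2e$ and $4f\le 2e$ in full generality.
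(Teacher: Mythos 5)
The paper does not prove this lemma at all: it is quoted as a classical result with a citation to Bondy and Murty, so there is no in-paper argument to compare against. Your proof is the standard Euler-formula argument (reduce to the connected case, then combine $n-e+f=2$ with the face-counting bounds $3f\le 2e$, respectively $4f\le 2e$ in the bipartite case) and it is correct, including the care taken over bridges and forests. One small remark: the detour through an edge-maximal supergraph is not actually needed, since for any connected simple plane graph on $n\ge 3$ vertices every face boundary walk already has length at least $3$, and in the bipartite case these closed walks have even length and hence length at least $4$; but your version, with trees handled separately, is equally valid.
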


\begin{lem}\label{obs-s1k}
	Let $G$ be a planar $W_{h,k}$-free graph on $n$ vertices with $\delta(G)\geq h+ 1$, and let $x\in V(G)$. If $d(x)\ge h+k+1$, then $G[N[x]]$ is a component of $G$. Moreover, if $d(x)\geq h+k+2$, then $G[N(x)]$ is $K_{1,h+1}$-free.
\end{lem}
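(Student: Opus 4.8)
The plan is to prove both statements by contradiction, in each case exhibiting a copy of $W_{h,k}$ inside $G$ with the high-degree vertex $x$ playing the role of the degree-$(k+1)$ endpoint $v_3$, so that the many neighbours of $x$ furnish the $k$ pendant leaves at $v_3$. The minimum-degree hypothesis $\delta(G)\ge h+1$ will supply the $h$ leaves at the other endpoint $v_1$. Throughout, the only real work is bookkeeping: checking that the $3+h+k$ chosen vertices can be taken pairwise distinct. I note in advance that planarity is never invoked — the statement is a purely local consequence of $W_{h,k}$-freeness together with the two degree conditions.

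For the first part, I would suppose $G[N[x]]$ is not a component, so that some $u\in N(x)$ has a neighbour $w\notin N[x]$; in particular $w\notin\{x,u\}$ and $w\notin N(x)$. Set $v_3=x$, $v_2=u$, $v_1=w$, so the path $v_1v_2v_3$ is present. Since $\delta(G)\ge h+1$ we have $|N(w)\setminus\{u\}|\ge h$, giving $h$ candidate leaves $b_1,\dots,b_h$ for $v_1=w$; these automatically avoid $x$ because $x\notin N(w)$. For the $k$ leaves at $v_3=x$, select from $N(x)\setminus(\{u\}\cup\{b_1,\dots,b_h\})$, which has size at least $d(x)-1-h\ge k$ by the hypothesis $d(x)\ge h+k+1$. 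As $w\notin N(x)$, these leaves are also distinct from $w$, and the whole configuration forms a $W_{h,k}$ — a contradiction. Hence no edge leaves $N[x]$, and since $x$ dominates $N[x]$, the subgraph $G[N[x]]$ is a single component.

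For the second part, I would assume $G[N(x)]$ contains a $K_{1,h+1}$, i.e.\ some $y\in N(x)$ has $h+1$ neighbours $z_1,\dots,z_{h+1}$ all lying in $N(x)$. Now put $v_1=y$ with leaves $z_1,\dots,z_h$, let $v_2=z_{h+1}$, and take $v_3=x$: the edges $yz_{h+1}$ and $z_{h+1}x$ give the path, and the fact that $z_{h+1}\in N(x)$ is exactly what legitimizes $x$ as $v_3$. The $k$ leaves at $v_3=x$ are chosen from $N(x)\setminus\{y,z_1,\dots,z_{h+1}\}$, a set of size $d(x)-(h+2)\ge k$ precisely because $d(x)\ge h+k+2$. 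This is the sharp point of the argument: the stronger degree bound $h+k+2$ (rather than $h+k+1$) is exactly what is needed to absorb the $h+2$ vertices $y,z_1,\dots,z_{h+1}$ already used and still leave $k$ free neighbours of $x$. All $3+h+k$ vertices are then distinct, so again $W_{h,k}\subseteq G$, the desired contradiction. The main obstacle in writing this cleanly is simply organizing the distinctness checks in each construction; no deeper idea is required.
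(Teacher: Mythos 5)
Your proof is correct and follows essentially the same route as the paper's: in both parts you exhibit a copy of $W_{h,k}$ with $x$ as the endpoint carrying the $k$ leaves, using $\delta(G)\ge h+1$ (resp.\ the star's $h+1$ leaves inside $N(x)$) to supply the $h$ leaves at the other endpoint, and the degree bound $d(x)\ge h+k+1$ (resp.\ $h+k+2$) to reserve $k$ unused neighbours of $x$. The distinctness bookkeeping you carry out matches the paper's choice of avoiding $Y\cap\{x_{h+2},\dots,x_{h+k+1}\}$, so there is nothing further to add.
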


\begin{proof} Let $N(x)=\{x_{1},x_{2},\dots,x_t\}$, where $t=d(x)$. Suppose that $G[N[x]]$ is not a component of $G$, then $E(N[x], V(G)\setminus N[x])\neq\emptyset$, and thus there exist some $x_i\in N(x)$, say $x_1$, and $y\in V(G)\setminus N[x]$ such that $x_1y\in E(G)$. Since $d(y)\ge\delta(G)\ge h+ 1$, we can let $y_i\in N(y)\setminus\{x_1\}$ for $1\le i\le h$ and $Y=\{y_1,\ldots,y_h\}$. Note that $t\ge h+k+1$, and hence $|N(x)\setminus Y|\ge k$. So we can assume $Y\cap \{x_{h+ 2},\ldots,x_{h+k+1}\}=\emptyset$. Then $(y_1y_2\dots y_h)y$-$x_1$-$x(x_{h+ 2}\cdots x_{h+k+1})$ is a copy of $W_{h,k}$, a contradiction.
	
	Next we suppose that $d(x)\ge h+k+2$ and $G[N(x)]$ contains a $K_{1,h+ 1}$ as its subgraph. Without loss of generality, we can assume that $G'$ is the  star $K_{1,h+ 1}$ with the bipartition $\{x_1\}$ and $\{x_2,\dots,x_{h+ 2}\}$, then $(x_{3},\dots,x_{h+ 2})x_{1}$-$x_{2}$-$x(x_{h+ 3}\dots x_{h+k+2})$ is a copy of  $W_{h,k}$, again a contradiction.\end{proof}

\begin{lem}\label{obs-xyz}
	Let $G$ be a planar $W_{h,k}$-free graph on $n$ vertices and $\delta(G)\geq h+ 1$, and let $x\in V(G)$. If $d(x)\ge h+k$, then $N(y)\subseteq N(x)$ for any $y\in N_2(x)$. Furthermore, $G[N[x]\cup N_2(x)]$ is a component of $G$.
\end{lem}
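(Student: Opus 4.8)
The plan is to prove the containment $N(y)\subseteq N(x)$ by contradiction, and then to read off the component statement as an easy consequence. Fix $y\in N_2(x)$. Since $\mathrm{dist}_G(x,y)=2$, the vertex $y$ is not adjacent to $x$, so every neighbour of $y$ is distinct from $x$, and there is a common neighbour $z\in N(x)\cap N(y)$. Suppose, for contradiction, that $N(y)\not\subseteq N(x)$; then there is some $w\in N(y)\setminus N[x]$. My strategy is to exhibit a copy of $W_{h,k}$ with central path $y$-$z$-$x$, where $y$ plays the role of $v_1$, $z$ of $v_2$, and $x$ of $v_3$, contradicting $W_{h,k}$-freeness.

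The heart of the argument is a counting step that produces the two leaf sets. Write $A=N(y)\setminus\{z\}$ and $B=N(x)\setminus\{z\}$; the hypotheses $\delta(G)\ge h+1$ and $d(x)\ge h+k$ give $|A|\ge h$ and $|B|\ge h+k-1$. The key observation is that $w\in A$ but $w\notin B$, because $w\notin N(x)$ (and $w\neq z$ as $z\in N(x)$). First I choose a set $S_y\subseteq A$ of exactly $h$ vertices that includes $w$ (possible since $|A|\ge h$); these are the $h$ leaves at $v_1=y$. Because $w\in S_y$ lies outside $B$, we have $|S_y\cap B|\le h-1$, whence $|B\setminus S_y|\ge (h+k-1)-(h-1)=k$; so I may choose a set $S_x\subseteq B\setminus S_y$ of exactly $k$ vertices, serving as the leaves at $v_3=x$. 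By construction $S_x\cap S_y=\emptyset$, the vertices $x,y,z$ are pairwise distinct and avoid $S_x\cup S_y$ (using $x\notin N(y)$ and $y\notin N(x)$), and all the required edges $yz,\ zx,\ \{yu:u\in S_y\},\ \{xv:v\in S_x\}$ are present. This is a copy of $W_{h,k}$, the desired contradiction, so $N(y)\subseteq N(x)$.

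For the ``furthermore'' part, set $W=N[x]\cup N_2(x)$, the set of vertices within distance $2$ of $x$. Connectivity of $G[W]$ is immediate: every vertex of $N(x)$ is joined to $x$, and every $y\in N_2(x)$ has a neighbour in $N(x)$. It remains to rule out edges leaving $W$. Any edge incident to $x$ or to a vertex of $N(x)$ reaches only vertices at distance $\le 2$ from $x$, hence stays in $W$; so an escaping edge would have to join some $y\in N_2(x)$ to a vertex $b$ at distance $\ge 3$. But then $b\in N(y)\subseteq N(x)$ by the first part, forcing $\mathrm{dist}_G(x,b)=1$, a contradiction. Thus no edge leaves $W$, and $G[W]=G[N[x]\cup N_2(x)]$ is a connected component of $G$.

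I expect the main obstacle to be the bookkeeping in the counting step: one must guarantee that the $h$ leaves at $y$ and the $k$ leaves at $x$ can be chosen disjointly even when $N(x)$ and $N(y)$ overlap heavily. The clean way through is precisely the remark that the ``extra'' neighbour $w$ lies outside $N(x)$, so spending it as a leaf of $y$ costs nothing against the budget $|N(x)\setminus\{z\}|\ge h+k-1$; this is exactly where the hypothesis $d(x)\ge h+k$ (rather than $h+k-1$) is needed, since otherwise $|B\setminus S_y|$ could drop to $k-1$.
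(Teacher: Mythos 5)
Your proof is correct and follows essentially the same route as the paper's: both derive a contradiction by exhibiting a copy of $W_{h,k}$ along the path $y$-$z$-$x$, placing the outside neighbour $w\in N(y)\setminus N[x]$ among the $h$ leaves at $y$ so that at least $k$ of the $d(x)-1\ge h+k-1$ remaining neighbours of $x$ survive as leaves at $x$. The only difference is that you spell out the ``furthermore'' component argument, which the paper leaves implicit.
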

\begin{proof}
	Let $N(x)=\{x_{1},x_{2},\dots,x_t\}$, and let $xx_1y$ be a path $P_3$, where $t=d(x)$ and $y\in N_2(x)$. Assume, to the contrary that $N(y)\nsubseteq N(x)$, that is, there exists a vertex $y_1\in N(y)\setminus N(x)$. Since $d(y)\ge\delta(G)\ge  h+ 1$, we can let $y_i\in N(y)\setminus\{x_1,y_1\}$ for $2\le i\le h$. Note that $t\ge h+k$ and $y_1\notin N(x)$, and hence we can assume that $\{y_1,y_2\dots y_h\}\cap \{x_{h+ 1},\dots,x_{h+k}\}=\emptyset$. So $(y_1y_2\dots y_h)y$-$x_1$-$x(x_{h+ 1}\dots x_{h+k})$ is a copy of $W_{h,k}$, a contradiction.
\end{proof}

\begin{lem}\label{obs-star}
	Let $G$ be a planar $W_{h,k}$-free graph on $n$ vertices and $\delta(G)\geq h+ 1$, where $ h+k\ge5$. If $G$ contains a $(h+k)$-$(h+k-1)$ star with center $x$. Then we have $N_2(x)\neq \emptyset$.
\end{lem}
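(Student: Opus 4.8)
The plan is to argue by contradiction: assume $N_2(x)=\emptyset$ and show that the component containing $x$ cannot be planar while remaining $W_{h,k}$-free. Write $d(x)=D\ge h+k$ and let $x_1,\dots,x_{h+k}\in N(x)$ be the $h+k$ leaves of the star, so $d(x_i)=h+k-1$. Since $N_2(x)=\emptyset$, every neighbour of every vertex of $N(x)$ lies in $N[x]$, so $G[N[x]]$ is a whole component of $G$ and each $x_i$ has all of its $h+k-1$ neighbours inside $N[x]$; exactly one of them is $x$, whence $x_i$ has $h+k-2$ neighbours in $N(x)$, i.e. $d_{G[N(x)]}(x_i)=h+k-2\ge 3$ because $h+k\ge 5$. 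The first key observation is that, since $x$ is adjacent to every vertex of $N(x)$, planarity of $G[N[x]]$ forces $G[N(x)]$ to be \emph{outerplanar} (a graph together with one extra vertex joined to all of it is planar exactly when the graph is outerplanar).

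I would then split on the size of $D$. When $D\le h+k+1$, at least $h+k$ of the $D$ vertices of $G[N(x)]$ (namely the leaves) have degree at least $3$, so at most $D-(h+k)\le 1$ vertices of $G[N(x)]$ have degree at most $2$. This contradicts the standard fact that every outerplanar graph on at least three vertices has at least two vertices of degree at most $2$ (complete it to a triangulated polygon on the same vertex set and use that such a triangulation has at least two ears). When $D\ge h+k+2$, Lemma~\ref{obs-s1k} applies and tells us that $G[N(x)]$ is $K_{1,h+1}$-free, i.e. $\Delta(G[N(x)])\le h$; but $d_{G[N(x)]}(x_1)=h+k-2\ge h+1$ since $k\ge 3$ (which holds because $h\le 2$ and $h+k\ge 5$), again a contradiction. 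Since the two ranges of $D$ cover all possibilities, $N_2(x)\neq\emptyset$.

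I expect the delicate point to be the small-degree range, in particular $D=h+k+1$. There the closed neighbourhood $N[x]$ contains only $h+k+2$ vertices, one fewer than the $h+k+3$ vertices of $W_{h,k}$, so one cannot hope to exhibit a forbidden $W_{h,k}$ inside the component, and the crude bound $e\le 3n-6$ (equivalently $e(G[N(x)])\le 2D-3$ for outerplanarity) is \emph{not} strong enough to rule the configuration out on its own. The argument must therefore exploit the finer structural fact that outerplanar graphs have two low-degree vertices rather than a mere edge count: the degree-$(h+k-2)$ leaves occupy all but one vertex, leaving too few candidate ears. The only remaining checks are the inequalities $h+k-2\ge 3$ and $h+k-2\ge h+1$, both immediate from $h+k\ge 5$ and $h\le 2\le k$.
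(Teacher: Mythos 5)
Your proof is correct, but it takes a genuinely different route from the paper's. The paper (implicitly taking $d(x)=h+k$, which is all that is needed in its applications, since there $\Delta(G)\le h+k$) sums degrees in the component $G[N[x]]$ to get $e(G[N[x]])=\frac{(h+k)^2}{2}$, rules out odd $h+k$ by the handshake lemma ($x$ would be the unique odd-degree vertex of that component), and rules out even $h+k\ge 6$ by comparing with the planar bound $3(h+k+1)-6$ from Lemma~\ref{thm-edge}. You instead pass to the link of $x$: since $G[N[x]]$ is the join of $x$ with $G[N(x)]$, planarity forces $G[N(x)]$ to be outerplanar, and the $h+k$ leaves each have degree $h+k-2\ge 3$ there, contradicting the fact that an outerplanar graph on at least three vertices has at least two vertices of degree at most two. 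Your self-diagnosis is accurate: when $d(x)=h+k+1$ the outerplanar edge bound alone is not enough, and the two-ears fact is genuinely needed --- a standard but external ingredient that the paper's handshake-plus-Euler argument avoids. In exchange, your argument covers $d(x)>h+k$, which the paper's written proof does not address, at the cost of invoking Lemma~\ref{obs-s1k} when $d(x)\ge h+k+2$ together with the inequality $h+k-2\ge h+1$, i.e.\ $k\ge 3$; this holds under the paper's standing hypothesis $1\le h\le 2\le k$ but is not literally implied by the lemma's stated hypothesis $h+k\ge 5$ alone. Since the lemma is only ever applied with $d(x)=h+k$ and $h\le 2\le k$, none of this affects correctness.
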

\begin{proof}
	Let $N(x)=\{x_{1},x_{2},\dots,x_{h+k}\}$ with $d(x_i)=h+k-1$ for $1\le i\le h+k$. Suppose that $N_2(x)=\emptyset$. Then $G[N[x]]$ is a component on $h+k+1$ vertices, and then $$e(G[N[x]])=\frac{(h+k)+(h+k-1)(h+k)}{2}=\frac{(h+k)^2}{2}.$$
On the other hand, if $h+k$ is odd, then $d(x)=h+k$ is the unique vertex of odd degree in $G[N[x]]$, a contradiction. So $h+k$ is even, that is, $h+k\ge 6$. Note that $G[N[x]]$ is planar, we have $$e(G[N[x]])\le 3(h+k+1)-6=3k+3h-3<\frac{(h+k)^2}{2}=e(G[N[x]])$$ for $h+k\ge6$, a contradiction.\end{proof}
\begin{lem}\label{obs-xy}
	Let $G$ be a planar $W_{h,k}$-free graph, and let $xy$ be a $p$-$q$ edge with $d(x)=p\ge k+2\ge q\ge d(y)$. If $N(x)\cap N(y)\neq\emptyset$, then $|N(x)\cap N(y)|\ge q-h$.
\end{lem}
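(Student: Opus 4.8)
The plan is to argue by contradiction: suppose $|N(x)\cap N(y)|\le q-h-1$ and produce a forbidden copy of $W_{h,k}$, contradicting that $G$ is $W_{h,k}$-free. When $q\le h$ the claimed bound $|N(x)\cap N(y)|\ge q-h$ is immediate from $N(x)\cap N(y)\ne\emptyset$, so only the case $q\ge h+1$ needs work, and there the negation of the conclusion is exactly $|N(x)\cap N(y)|\le q-h-1$.

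First I would partition the neighbourhood of $y$. Since $xy\in E(G)$ we have $x\in N(y)$, so writing $A=N(x)\cap N(y)$ and $B=N(y)\setminus(N(x)\cup\{x\})$ gives a disjoint decomposition $N(y)=\{x\}\cup A\cup B$, whence $|B|=d(y)-1-|A|=q-1-|A|$. Under the contradiction hypothesis $|A|\le q-h-1$ this yields $|B|\ge h$; moreover $A\ne\emptyset$ by assumption, so I may fix a common neighbour $w\in A$.

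Next I would build the copy of $W_{h,k}$ on the central path $y$-$w$-$x$, taking $y$ as the $h$-side endpoint $v_1$, $w$ as the middle vertex $v_2$, and $x$ as the $k$-side endpoint $v_3$. For the $h$ leaves of $v_1$ I would pick any $h$ vertices $u_1,\dots,u_h$ of $B$ (possible since $|B|\ge h$); for the $k$ leaves of $v_3$ I would pick any $k$ vertices $w_1,\dots,w_k$ of $N(x)\setminus\{w,y\}$, which is possible because $d(x)=p\ge k+2$ forces $|N(x)\setminus\{w,y\}|\ge k$. The resulting configuration $(u_1\cdots u_h)y$-$w$-$x(w_1\cdots w_k)$ is then a copy of $W_{h,k}$, the desired contradiction.

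The only real work is checking that all chosen vertices are pairwise distinct, so that the copy of $W_{h,k}$ is genuine; this is where the partition does its job. The leaves $u_i\in B$ are not adjacent to $x$, hence differ from every $w_j\in N(x)$ and from $w\in N(x)$, and they lie in $N(y)$ so differ from $y$ and from $x$ (as $x\notin B$); the leaves $w_j$ avoid $w$ and $y$ by construction and avoid $x$ since $x\notin N(x)$. I expect no genuine obstacle beyond this bookkeeping; in particular planarity is never used, so the statement is really a consequence of $W_{h,k}$-freeness together with the degree inequality $p\ge k+2$.
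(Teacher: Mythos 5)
Your proposal is correct and is essentially identical to the paper's own proof: both argue by contradiction, note that $|N(y)\setminus N[x]|\ge(q-1)-(q-h-1)=h$, pick a common neighbour as the middle vertex, and take the $k$ remaining leaves from $N(x)$ using $p\ge k+2$. The only difference is presentational (your explicit partition of $N(y)$ and the distinctness bookkeeping, which the paper leaves implicit).
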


\begin{proof}
	Suppose that $|N(x)\cap N(y)|\le q-h-1$. Then $|N(y)\setminus N[x]|=|N(y)\setminus\{x\}|-|N(x)\cap N(y)|\ge (q-1)-(q-h-1)=h, $ that is, there exist $h$ vertices $y_1,y_2,\dots,y_h\in N(y)\setminus N[x]$. Let $x_1\in N(x)\cap N(y) $ and $N(x)\supseteq \{y,x_1,x_2\dots,x_{k+1}\}$, then $(y_1y_2\dots y_h)y$-$x_1$-$x(x_2x_3\dots x_{k+1})$ is a copy of $W_{h,k}$, a contradiction.
\end{proof}

Next we present some results will be used in the proof of our results.
\begin{lem}\label{C-edge}
	Let $G$ be a simple planar graph on $n$ vertices, and let $C$ a component of $G$. If $e(G-C)\le \tau(n-|C|)$ and $|C|\le \frac {6}{3-\tau}$, where $\tau<3$, then $e(G)\le\tau n$. \end{lem}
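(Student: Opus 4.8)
The plan is to use that $C$ is an entire connected component, so $G$ is the disjoint union of $C$ and $G-C$ and the edge count splits additively as $e(G)=e(C)+e(G-C)$. The whole statement then reduces to bounding $e(C)$ on its own and adding the given bound on $e(G-C)$.

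First I would apply the planar edge bound (Lemma~\ref{thm-edge}) to the component $C$. In the main case $|C|\ge 3$ this yields $e(C)\le 3|C|-6$, so together with the hypothesis $e(G-C)\le\tau(n-|C|)$ I obtain
\[
e(G)=e(C)+e(G-C)\le \bigl(3|C|-6\bigr)+\tau\bigl(n-|C|\bigr)=\tau n+(3-\tau)|C|-6 .
\]
It then suffices to check that the residual term $(3-\tau)|C|-6$ is nonpositive. Since $\tau<3$ gives $3-\tau>0$, this is equivalent to $|C|\le \frac{6}{3-\tau}$, which is precisely the second hypothesis; hence $e(G)\le\tau n$ in this case. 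Note that this single computation also handles the extreme situation $G=C$ (where $|C|=n$), since the hypothesis then forces $n\le\frac{6}{3-\tau}$, exactly what is needed.

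The degenerate cases $|C|\in\{1,2\}$, where the bound $3|C|-6$ is negative and hence useless, I would treat by hand: a component on one vertex contributes no edges and one on two vertices contributes a single edge, so $e(C)$ is tiny and $e(G)\le\tau n$ follows immediately from $e(G-C)\le\tau(n-|C|)$, using only that $\tau$ is bounded below by a positive constant (as it always is in the applications, where $\tau\ge 2$).

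In truth there is no serious obstacle here: this is a \emph{gluing} lemma whose entire force is the additive splitting of edges across a component, combined with the exact numerical matching between the planar bound $3|C|-6$ and the threshold $\frac{6}{3-\tau}$. The only point demanding any care is confirming that the small-$|C|$ cases, where Lemma~\ref{thm-edge} does not apply, are covered; once those are dispatched the inequality is a one-line computation.
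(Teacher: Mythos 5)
Your proof is correct and follows essentially the same route as the paper: split $e(G)=e(C)+e(G-C)$, apply the planar bound $e(C)\le 3|C|-6$, and observe that the hypothesis $|C|\le\frac{6}{3-\tau}$ makes the residual term $(3-\tau)|C|-6$ nonpositive. Your extra attention to the cases $|C|\in\{1,2\}$ (where Lemma~\ref{thm-edge} formally requires $n\ge 3$) is a small point the paper glosses over, but it does not change the argument.
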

\begin{proof} Since $G$ is planar, $C$ is planar, and thus, by Lemma \ref{thm-edge}, $e(C)\le 3|C|-6$. Note that
	$C$ is a component of $G$, and hence $$e(G)=e(G-C)+e(G[C])\le \tau(n-|C|)+3|C|-6=\tau n+(3-\tau)|C|-6\le \tau n.$$ So the assertion holds.
\end{proof}
\begin{lem}\label{obs-k+2}
	Let $G$ be a planar $W_{h,k}$-free graph on $n$ vertices and $\delta(G)\ge h+ 1$, where $1\le h\le 2\le k$, and let $x\in V(G)$ with $d(x)\ge h+k+1$. Suppose that $e(G- N[x])\le \frac{3(h+k)}{h+k+2}(n-d(x)-1),$  then $$e(G)\le\frac{3(h+k)}{h+k+2}n.$$
\end{lem}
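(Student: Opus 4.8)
The plan is to exploit the fact that $N[x]$ is completely cut off from the rest of the graph, which reduces the problem to bounding edges inside $N[x]$. Write $t=d(x)$ and abbreviate $\tau=\frac{3(h+k)}{h+k+2}$. Since $d(x)\ge h+k+1$ and $\delta(G)\ge h+1$, Lemma~\ref{obs-s1k} guarantees that $G[N[x]]$ is a component of $G$; consequently there are no edges joining $N[x]$ to $V(G)\setminus N[x]$, and so
\[
e(G)=e(G[N[x]])+e(G-N[x])\le e(G[N[x]])+\tau(n-t-1),
\]
using the hypothesis on $e(G-N[x])$. Hence it suffices to prove the single inequality $e(G[N[x]])\le\tau(t+1)$, since then $e(G)\le\tau(t+1)+\tau(n-t-1)=\tau n$.

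I would bound $e(G[N[x]])$ by splitting on the size of $t$. When $t=h+k+1$, the subgraph $G[N[x]]$ is planar on $t+1=h+k+2\ge3$ vertices, so Lemma~\ref{thm-edge} gives $e(G[N[x]])\le 3(h+k+2)-6=3(h+k)$, which equals $\tau(h+k+2)=\tau(t+1)$ exactly, settling this case. The point worth flagging is that the naive planar bound $3(t+1)-6$ already matches $\tau(t+1)$ precisely when $t=h+k+1$, but overshoots as soon as $t$ grows, so a genuinely different argument is required for larger $t$.

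For $t\ge h+k+2$ I would invoke the second half of Lemma~\ref{obs-s1k}: $G[N(x)]$ is $K_{1,h+1}$-free, hence every vertex of $N(x)$ has at most $h$ neighbours inside $N(x)$, giving $e(G[N(x)])\le\tfrac{ht}{2}$ by double counting the degrees in $G[N(x)]$. Adding the $t$ edges incident to $x$ yields $e(G[N[x]])\le\tfrac{(h+2)t}{2}$. It then remains to check $\tfrac{(h+2)t}{2}\le\tau(t+1)$, equivalently $(h+2)(h+k+2)\,t\le 6(h+k)(t+1)$. Because $h\in\{1,2\}$ and $k\ge2$ force the slope condition $\tfrac{h+2}{2}\le\tau$ (with equality only at $h=k=2$), the coefficient of $t$ on the left does not exceed that on the right, so after rearranging one obtains a linear inequality in $t$ with nonpositive leading coefficient and positive constant $6(h+k)$; it therefore holds for every $t\ge0$, the borderline $h=k=2$ being saved precisely by the $(t+1)$.

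The argument is essentially a clean two-case dichotomy, and the only real content is recognizing that one cannot survive on the planar edge bound alone once $d(x)\ge h+k+2$: the sparsity coming from $K_{1,h+1}$-freeness is exactly what rescues the slope. I expect the main (though still routine) obstacle to be the final numeric verification of $\tfrac{(h+2)t}{2}\le\tau(t+1)$, and in particular confirming that the slope inequality $\tfrac{h+2}{2}\le\tfrac{3(h+k)}{h+k+2}$ holds throughout the range $h\in\{1,2\}$, $k\ge2$, so that no hidden restriction on $h+k$ is actually needed for this lemma.
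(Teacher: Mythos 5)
Your proof is correct and follows essentially the same route as the paper: the same two-case split on $d(x)=h+k+1$ versus $d(x)\ge h+k+2$, with the first case handled by the planar edge bound (which the paper packages as Lemma~\ref{C-edge}) and the second by the $K_{1,h+1}$-freeness of $G[N(x)]$ from Lemma~\ref{obs-s1k}. If anything, your closing arithmetic is tidier: you keep the slope $\frac{h+2}{2}$ and verify $\frac{(h+2)t}{2}\le \frac{3(h+k)}{h+k+2}(t+1)$ directly for all $t$, whereas the paper first relaxes $\frac{h+2}{2}d(x)$ to $2d(x)$, which makes its concluding estimate delicate when $h+k=3$.
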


\begin{proof} Since $d(x)\ge h+k+1$, $G[N[x]]$ is a component on $d(x)+1$ vertices by Lemma  \ref{obs-s1k}. 
If $d(x)=h+k+1$, then $|N[x]|=h+k+2=\frac {6}{3-\tau}$, where $\tau=\frac{3(h+k)}{h+k+2}$, and hence, by Lemma \ref{C-edge},  $e(G)\le\frac{3(h+k)}{h+k+2}n$.
	
If  $d(x)\ge h+k+2$, then by Lemma  \ref{obs-s1k}, $G[N(x)]$ is $K_{1,h+ 1}$-free. Thus $$e(G[N[x]])\leq\frac{1}{2}\left(d(x)+(h+ 1)d(x)\right)=\frac{h+ 2}{2}d(x)\le 2d(x).\eqno(1)$$ So, by the assumption and (1), we have
	\begin{eqnarray*}~~~~~e(G)&=&e(G-N[x]) + e(G[N[x]])\leq \frac{3(h+k)}{h+k+2}(n-d(x)-1)+{2}d(x)\\
		&=&\frac{3(h+k)}{h+k+2}n-\frac{(k-3)d(x)}{h+k+2}-\frac{3(k+1)}{k+3} \\
		&\leq&\frac{3(h+k)}{h+k+2}n-\frac{k^{2}+3k-6}{k+3}<\frac{3(h+k)}{h+k+2}n,\end{eqnarray*}where the last second inequality follows from $d(x)\ge h+k+2$.
\end{proof}

We omit the proof of the next lemma since it is straightforward.
\begin{lem}\label{delta}
	Let $G=(V(G),E(G))$ be a planar graph on $n$ vertices, and let $x\in V(G)$. If $d(x)\le  \sigma$ and $e(G-x)\le \sigma(n-1)$, then $e(G)\le  \sigma n$. \end{lem}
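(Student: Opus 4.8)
The plan is to reduce everything to the single edge-counting identity that relates $e(G)$ and $e(G-x)$. Deleting the vertex $x$ from $G$ removes precisely the edges incident to $x$, and there are exactly $d(x)$ of these; no other edges are affected. Hence
$$e(G)=e(G-x)+d(x).$$
This identity requires nothing about the structure of $G$ beyond it being a (simple) graph, so planarity plays no role here — it is inherited from the ambient setting of the paper rather than used in the argument.

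With the identity in hand, I would simply substitute the two hypotheses. From $e(G-x)\le\sigma(n-1)$ and $d(x)\le\sigma$ we obtain
$$e(G)=e(G-x)+d(x)\le\sigma(n-1)+\sigma=\sigma n,$$
which is exactly the desired bound. That completes the proof.

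There is essentially no obstacle to overcome, which is why the authors declare the lemma straightforward and omit the details: the whole content is the decomposition $e(G)=e(G-x)+d(x)$ together with a one-line addition. The only point worth flagging is that the hypothesis $d(x)\le\sigma$ is used at full strength (one cannot weaken it and still recover the same bound), and that the lemma is a clean ``peel off a low-degree vertex'' step intended to be applied inductively elsewhere in the paper, pairing naturally with the component-removal estimate of Lemma~\ref{C-edge}.
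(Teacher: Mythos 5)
Your proof is correct and is exactly the intended argument: the paper omits the proof as ``straightforward'' precisely because it reduces to the identity $e(G)=e(G-x)+d(x)$ followed by substituting the two hypotheses. Nothing further is needed.
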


\section{Proof of Theorem~\ref{thm-h+k=6}}


\indent First, we present the lower bound of the $\ex_{\p}(n,W_{h,k})$. Since $W_{h,k}$ contains $h+k+3$ vertices, then a maximal planar graph with $n\leq h+k+2$ vertices does not contain a $W_{h,k}$. Let $(h+k+2)|n$. Consider the plane graph consisting of $\frac{n}{h+k+2}$ disjoint copies of maximal planar graphs on $h+k+2$ vertices. This graph does not contain a $W_{h,k}$. Hence, $$\ex_{\p}(n,W_{h,k})\geq (3(h+k+2)-6) \cdot \frac{n}{h+k+2} =\frac{3(h+k)}{h+k+2}n.$$
In particular, we have $\ex_{\p}(n,W_{2,4})\ge\frac{9}{4}n$.

Next we will show that $\ex_{\p}(n,W_{h,5})\geq\frac{5}{2}n$. To prove the lower bound, we construct the plane graph consisting of $\frac{n}{12}$ disjoint copies of $5$-regular triangulation on $12$ vertices, see Figure \ref{figure-s15}. Note that $W_{h,5}$ contains a vertex which degree is six, so we know that this graph does not contain a $W_{h,5}$. Hence $$\ex_{\p}(n,W_{h,5})\ge\frac{n}{12}\cdot (36-6)=\frac{5}{2}n.$$

\begin{figure}[ht]\medskip
	\centering
	\subfigure{
		\begin{minipage}[t]{0.3\textwidth}
			\centering
			\begin{tikzpicture}[scale=0.7]
				\tikzstyle{vertex}=[draw,circle,fill=black,minimum size=6,inner sep=0]		
\draw[ultra thick] (0,0) circle (2cm);
\draw[fill=black](0, 2)circle(4.0pt);
\draw[fill=black](-1.75,-1)circle(4.0pt);
\draw[fill=black](1.75,-1)circle(4.0pt);
\draw[fill=black](0, 1.3)circle(4.0pt);
\draw[fill=black](0,-1.3)circle(4.0pt);
\draw[fill=black](1.15,0.65)circle(4.0pt);
\draw[fill=black](-1.15,0.65)circle(4.0pt);
\draw[fill=black](-1.15,-0.65)circle(4.0pt);
\draw[fill=black](1.15,-0.65)circle(4.0pt);
\draw[fill=black](0, -0.65)circle(4.0pt);
\draw[fill=black](0.55,0.32)circle(4.0pt);
\draw[fill=black](-0.55,0.32)circle(4.0pt);

\draw[ultra thick](0,2)--(1.15,0.65)--(1.75,-1)--(0,-1.3)--(-1.75,-1)--(-1.15,0.65)--(0,2)(0,1.3)--(1.15,0.65)--(1.15,-0.65)--(0,-1.3)--(-1.15,-0.65)--(-1.15,0.65)--(0,1.3)(0,2)--(0,1.3)(1.75,-1)--(1.15,-0.65)(-1.75,-1)--(-1.15,-0.65)(0,-0.65)--(0.55,0.32)--(-0.55,0.32)--(0,-0.65)(0.55,0.32)--(1.15,0.65)(-0.55,0.32)--(-1.15,0.65)(0,-0.65)--(0,-1.3)(0,1.3)--(0.55,0.32)(0,1.3)--(-0.55,0.32)(0,-0.65)--(1.15,-0.65)--(0.55,0.32)(0,-0.65)--(-1.15,-0.65)--(-0.55,0.32);
			\end{tikzpicture}
			
		\end{minipage}
	}				
	\caption{The $5$-regular triangulation on $12$ vertices and $30$ edges.}
	\label{figure-s15}
\end{figure}
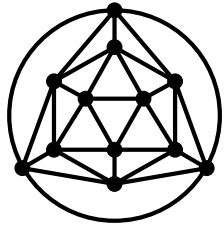

Following we will prove the upper bound of $\ex_{\p}(n,W_{h,k})$ for $1\le h\le 2\le k\le5$.  Let
$$
	f(n)=
		\left\{
		\begin{array}{ll}
			\frac{3(h+k)}{h+k+2}n, &  \text{if } 3\le h+k\le 5;\\
			
			\frac{5}{2}n,  &  \text{if } h+k=6;\\
			
			\frac{17}{6}n,  &  \text{if } h=2,~k=5.
		\end{array}
		\right.
	$$ Note that if $h+k\le 10$, then we can check that $\frac{3(h+k)}{h+k+2}n\le\frac{5}{2}n<\frac{17}{6}n$.

We proceed the proof by induction on $n$.  If $n\le h+k+2$, then $n-2\le n-\frac{2}{h+k+2}n=\frac{(h+k)}{h+k+2}n$, and thus, by Lemma \ref{thm-edge}, $$e(G)\le 3n-6\le \frac{3(h+k)}{h+k+2}n\le f(n)$$ for $1\le h\le 2\le k\le5$. So we can assume that $n\ge h+k+3$ and $\ex_{\p}(l,W_{h,k})\le f(l)$ for $l\le n-1$. We are now going to show that $\ex_{\p}(n,W_{h,k})\le f(n)$.

Let $G$ be a planar $W_{h,k}$-free graph on $n$ vertices  for $1\le h\le 2\le k\le5$, and let $V_i=\{ v\in V(G)~|~ d(v)=i\}$,  $n_{i}=|V_i|$. Then $G$ is $K_{3,3}$-free. For a component $H$ of $G$, we will write $V(H)$ for it.
For any $\emptyset\subset T \subset V(G)$, we denote $\overline{T}:=V(G)\setminus T$.  For any $xy\in E(G)$, we always  let $S[xy]:=N[x]\cup N(y)$, $S_{xy}:= N(x)\cap N(y)$, $S_x:= N(x)\setminus N[y]$ and $S_y:=N(y)\setminus N[x]$. Then $S[xy]=\{x,y\}\cup S_{xy}\cup S_x\cup S_y$, see Figure \ref{fig1}.

\begin{figure}[ht]\medskip
	\centering
	\subfigure{
		\begin{minipage}[t]{0.3\textwidth}
			\centering
\begin{tikzpicture}[scale=0.7]
	\tikzstyle{vertex}=[draw,circle,fill=black,minimum size=6,inner sep=0]		
	
	\node[vertex] (e) at
	(3.7,0){};
	\node[vertex] (f) at
	(3.7,-1){};
	\node[vertex] (d) at (-1.3,-1) {};
	\node[vertex] (c) at (-1.3,0){};
	\node[vertex] (b) at (1.2,0) {};
	\node[vertex] (a) at (1.2,1){};
	\node[vertex] (x) at (0,-1)[label={[xshift=-6pt, yshift=-22pt] $x$}] {};
	\node[vertex] (y) at (2.4,-1)[label={[xshift=6pt, yshift=-22pt] $y$}] {};
	\draw (-2.1,-0.5) node[right=0pt]{$S_x$};
	\draw (3.7,-0.5) node[right=0pt]{$S_y$};
	\draw (0.6,1.5) node[right=0pt]{$S_{xy}$};
	\draw[fill=black] (-1.3,-0.3) circle (0.8pt);
	\draw[fill=black] (-1.3,-0.5) circle (0.8pt);
	\draw[fill=black] (-1.3,-0.7) circle (0.8pt);
	\draw[fill=black] (3.7,-0.3) circle (0.8pt);
	\draw[fill=black] (3.7,-0.5) circle (0.8pt);
	\draw[fill=black] (3.7,-0.7) circle (0.8pt);
	\draw[fill=black] (1.2,0.3) circle (0.8pt);
	\draw[fill=black] (1.2,0.5) circle (0.8pt);
	\draw[fill=black] (1.2,0.7) circle (0.8pt);
	
	\draw[ultra thick] (x) -- (a);
	\draw[ultra thick] (x) -- (b);
	\draw[ultra thick] (x) -- (c);
	\draw[ultra thick] (x) -- (d);
	\draw[ultra thick] (y) -- (a);
	\draw[ultra thick] (y) -- (b);
	\draw[ultra thick] (y) -- (e);
	\draw[ultra thick] (y) -- (f);
	\draw[ultra thick] (x) -- (y);
	\draw[thick, dashed,] (0.6,-0.5)rectangle(1.8,1.8);
	\draw[thick, dashed, blue] (-0.7,0.5)rectangle(-2.0,-1.5);
	\draw[thick, dashed, red] (4.3,0.5)rectangle(3.0,-1.5);
\end{tikzpicture}

\end{minipage}
}				
	\caption{$S[xy]$ for $xy\in E(G)$ }\label{fig1}
\end{figure}
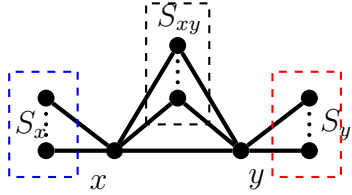

Now we will present two claims.

\begin{claim} \label{S-com}
	 If $G$ contains a $(h+k)$-$(h+k)$ edge $xy$, then $S[xy]$ is a component of $G$.
\end{claim}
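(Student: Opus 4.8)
The plan is to argue by contradiction, after first reducing to a single issue: the induced graph $G[S[xy]]$ is automatically connected, since every vertex of $S[xy]=\{x,y\}\cup S_{xy}\cup S_x\cup S_y$ is adjacent to $x$ or to $y$ and $xy\in E(G)$, so to prove $S[xy]$ is a component it suffices to show that no edge leaves $S[xy]$. Suppose then that some edge $zw$ has $z\in S[xy]$ and $w\notin S[xy]$. Because $N(x),N(y)\subseteq S[xy]$, the endpoint $z$ cannot be $x$ or $y$; hence $z\in S_{xy}\cup S_x\cup S_y$, and by the $x$–$y$ symmetry I may assume $z\in N(x)$. Then $w\in N_2(x)$ (as $w\notin N[x]$), and since $d(x)=h+k$, Lemma~\ref{obs-xyz} forces $N(w)\subseteq N(x)$. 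This fact is exactly what makes things delicate: any attempt to use $x$ (or $y$) as the degree-$(h+k)$ endpoint of a copy of $W_{h,k}$ is obstructed, because all candidate leaves are then trapped inside $N(x)$ and $d(x)=h+k$ leaves no slack — one would need $h+k+1$ distinct vertices inside a set of size $h+k$.

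So the construction must put the ``broom head'' on the $z$-side. I would take the path $z$–$x$–$y$ and set $v_1=z$, $v_2=x$, $v_3=y$, using $w$ as one of the $h$ leaves of $v_1=z$ and choosing the $k$ leaves of $v_3=y$ from $N(y)\setminus\{x\}$, which has $h+k-1\ge k$ vertices. When $z\in S_x$, i.e. $z\notin N(y)$, this goes through cleanly: neither $w$ nor $z$ lies in $N(y)$, so the leaves of $y$ avoid the path vertices automatically, and a short count (using $\delta(G)\ge h+1$ and $h\le 2$) lets me fill out the $h$ leaves of $z$ disjointly from the leaves of $y$. This yields a copy of $W_{h,k}$, a contradiction; the case $z\in S_y$ follows after swapping $x$ and $y$.

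The remaining and genuinely hard case is $z\in S_{xy}$, i.e. $z$ adjacent to both $x$ and $y$. Here the previous count loses a slot, since one must also forbid $z$ itself as a leaf of $y$, and the direct broom fails. Instead I would exploit symmetry: since $z\in N(y)$ and $w\notin N[y]$ we also have $w\in N_2(y)$, so applying Lemma~\ref{obs-xyz} to $y$ gives $N(w)\subseteq N(y)$ as well; hence $N(w)\subseteq N(x)\cap N(y)=S_{xy}$. Thus $w$, together with $x$ and $y$, is adjacent to at least $d(w)\ge h+1$ common vertices of $S_{xy}$, producing a $K_{3,h+1}$ with parts $\{x,y,w\}$ and $h+1$ chosen common neighbours. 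For $h=2$ this is a $K_{3,3}$, contradicting the $K_{3,3}$-freeness of $G$ established in the proof, and the claim follows.

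I expect the main obstacle to be this last case precisely when $h=1$: there $\{x,y,w\}$ meets $S_{xy}$ in only a $K_{3,2}$, which neither planarity nor $K_{3,3}$-freeness rules out, and Lemma~\ref{obs-xyz} shows that no admissible length-three tail can leave $N(x)$, so no broom with head $x$ or $y$ exists. Resolving $h=1$ therefore cannot be purely local; I anticipate needing an extra input from the surrounding induction (for instance, that the component in question has already been absorbed into a smaller instance, or a dedicated analysis of the low-degree vertices of $S_{xy}$), and this is where I would concentrate the effort.
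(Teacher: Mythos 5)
Your reduction to ``no edge leaves $S[xy]$'' and your case split are sound, and your handling of $z\in S_x$ (building the copy of $W_{h,k}$ on the path $z$--$x$--$y$ with $w$ as a leaf of $z$) is a correct, if more laborious, substitute for what the paper does: it simply applies Lemma~\ref{obs-xyz} to $y$ (since $z\in S_x$ means $z\in N_2(y)$ and $d(y)=h+k$) to get $N(z)\subseteq N(y)\subseteq S[xy]$, so no edge from $S_x$ or $S_y$ can exit $S[xy]$ at all, and the only case left is $z\in S_{xy}$.

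The genuine gap is your unresolved subcase $z\in S_{xy}$ with $h=1$. You correctly derive $N(w)\subseteq N(x)\cap N(y)=S_{xy}$ and form the complete bipartite graph on $\{x,y,w\}$ versus $N(w)$, but you then bound $d(w)$ only by $\delta(G)\ge h+1=2$, which is weaker than what is actually available. At every point where this claim is invoked (the subsections with $h+k\ge 5$), the standing reduction gives $\delta(G)\ge 3$ --- the degree-$\le 2$ vertices have already been deleted by induction via Lemma~\ref{delta}, since $2\le \frac{3(h+k)}{h+k+2}$ once $h+k\ge 4$. Hence $d(w)\ge 3$ and $\{x,y,w\}$ together with $N(w)$ yields a $K_{3,3}$, contradicting planarity, uniformly in $h$. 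This is exactly the paper's one-line finish ($|S_{xy}|\ge |N(w)|=d(w)\ge\delta(G)\ge 3$); no extra input from the induction or any analysis of low-degree vertices of $S_{xy}$ is needed. So the idea you flagged as the ``genuinely hard case'' is in fact closed by the minimum-degree hypothesis you undercounted.
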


\noindent{\bf Proof of Claim \ref{S-com}.}  First we note that if $v\in S_x$, then $v
\in N_2(y)$, and then by Lemma  \ref{obs-xyz}, $N(v)\subseteq N(y)$. Similarly, $N(u)\subseteq N(x)$ for all $u\in S_y$.
 If $S[xy]$ is not a component of $G$, then there exists two vertices $w\in S_{xy}$ and $z\in \overline{S[xy]}$ such that $wz\in E(G)$.
 Note that $z\in N_2(x)\cap N_2(y)$ and $d(x)=d(y)=h+k$, and hence, by Lemma  \ref{obs-xyz}, $N(z)\subseteq S_{xy}$. Thus $|S_{xy}|\ge |N(z)|=d(z)\ge \delta(G)\ge 3$, which implies that  $G$ contains a $K_{3,d(z)}$ with the bipartition $N(z)$ and $\{x,y,z\}$, a contradiction. Therefore, $S[xy]$ is a component of $G$. { \hfill $\Box$}

 \begin{claim} \label{S-3path}
 	If $G$ contains a $(h+k)$-$l$-$(h+k)$ path $xyz$ with $xz\notin E(G)$ and $d(x)=d(z)=h+k$, $d(y)=l$ for $3\le l\le h+k-1$, then $N(x)=N(z)$. Moreover, $N[x]\cup \{z\}$ is a component of $G$.
 \end{claim}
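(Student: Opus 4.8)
The plan is to prove the two assertions in turn, the equality of neighbourhoods first and the component statement second. For $N(x)=N(z)$, the key observation is that the hypotheses place $x$ and $z$ in each other's second neighbourhood: since $xyz$ is a path with $xz\notin E(G)$ and $x\neq z$, the vertex $y$ witnesses $z\in N_2(x)$ and $x\in N_2(z)$. Because $d(x)=d(z)=h+k\ge h+k$, I would apply Lemma~\ref{obs-xyz} twice, once centred at $x$ and once at $z$: the first application yields $N(z)\subseteq N(x)$ and the second yields $N(x)\subseteq N(z)$, so $N(x)=N(z)$. This half uses only $\delta(G)\ge h+1$ and no planarity beyond what Lemma~\ref{obs-xyz} already encodes.

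For the ``moreover'' part, note that Lemma~\ref{obs-xyz} applied at $x$ already gives that $G[N[x]\cup N_2(x)]$ is a component of $G$, and $z\in N_2(x)$; hence it suffices to show $N_2(x)=\{z\}$. The crucial by-product of the first half is that $x$ and $z$ are \emph{both} adjacent to every vertex of $N(x)$, i.e.\ $\{x,z\}$ together with $N(x)$ spans a $K_{2,h+k}$. I would then argue by contradiction. Suppose some $w\in N_2(x)\setminus\{z\}$ exists. By Lemma~\ref{obs-xyz} we have $N(w)\subseteq N(x)$, and since $\delta(G)\ge 3$ the vertex $w$ has three distinct neighbours $a,b,c\in N(x)$. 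As $x,z,w$ are pairwise distinct and none lies in $N(x)$ (indeed $w,z\in N_2(x)$, while $x\notin N(x)$), and each of $x,z,w$ is adjacent to each of $a,b,c$, the vertices $\{x,z,w\}$ and $\{a,b,c\}$ span a $K_{3,3}$, contradicting the fact that $G$ is planar (equivalently, $K_{3,3}$-free). Therefore $N_2(x)=\{z\}$, and the component is exactly $N[x]\cup\{z\}$.

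The main obstacle is precisely this second step: ruling out any further vertex in $N_2(x)$. The reason the first half is indispensable is that the equality $N(x)=N(z)$ supplies two ``twin'' vertices $x,z$ joined to all of $N(x)$, so two of the three rows of the forbidden $K_{3,3}$ come for free and only a third vertex $w$ is needed. I would emphasise that $\delta(G)\ge 3$ is genuinely used here and cannot be weakened to $\delta(G)\ge h+1$: when $h=1$, a degree-$2$ vertex attached to two neighbours of $x$ can be added without ever creating a $W_{1,k}$, so the conclusion fails under the weaker bound; it is the reduction to minimum degree $3$ carried out in the main induction (and automatic when $h=2$) that excludes this. This argument is the exact analogue of the $K_{3,d(z)}$ computation used in the proof of Claim~\ref{S-com}, and beyond $d(x)=d(z)=h+k$ and $xz\notin E(G)$ it does not actually use the precise value $l$ of $d(y)$.
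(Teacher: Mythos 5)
Your proof is correct and follows essentially the same route as the paper: both halves rest on Lemma~\ref{obs-xyz}, first applied at $x$ and at $z$ to get $N(x)=N(z)$, then to force $N(w)\subseteq N(x)$ for any further second neighbour $w$, which together with $\delta(G)\ge 3$ yields a forbidden $K_{3,3}$ on $\{x,z,w\}$ and three common neighbours. The paper phrases the second step as exhibiting an edge leaving $N[x]\cup\{z\}$ rather than as $N_2(x)=\{z\}$, but the contradiction produced is identical.
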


 \noindent{\bf Proof of Claim  \ref{S-3path}.}
 Since $xz\notin E(G)$, we have  $x\in N_2(z)$ and $z\in N_2(x)$. Note that $d(x)=d(z)=h+k$, and hence, by Lemma  \ref{obs-xyz}, $N(x)\subseteq N(z)$ and $N(z)\subseteq N(x)$. So $N(x)=N(z)$.

 Next we show that $N[x]\cup \{z\}$ is a component of $G$.
 If not, then there exist two vertices $u\in\overline{N[x]\cup \{z\}}$ and $v\in N(x)$ such that $uv\in E(G)$, that is, $u\in N_2(x)\cap N_2(z)$. Note that $d(x)=d(z)=h+k$, and hence by Lemma  \ref{obs-xyz}, we have $N(u)\subseteq (N(x)\cap N(z))$. Since $|N(u)|=d(u)\ge\delta(G)\geq 3$, there exists a $K_{3,d(u)}$ with the bipartition $N(u)$ and $\{x,z,u\}$ in $G$, a contradiction. Therefore, $N[x]\cup \{z\}$ is a component of $G$.
 { \hfill $\Box$}\medskip

If $G$ has a vertex $v$ with $d(v)\leq 1<9/5=\frac{3(h+k)}{h+k+2}$ when $h+k=3$, and $d(v)\leq 2\le\frac{3(h+k)}{h+k+2}$ when $h+k\ge 4$, then by the induction hypothesis, $e(G-x)\le \frac{3(h+k)}{h+k+2}(n-1)$, and thus, by Lemma \ref{delta}, $e(G)\le \frac{3(h+k)}{h+k+2}n\le f(n).$
 So we can assume that $\delta(G)\geq 2=h+1$ when $h+k=3$ (here $h=1$), and $\delta(G)\geq 3\ge h+1$ when $h+k\ge 4$. Furthermore, if $G$ contains a vertex $x$ such that $d(x)\ge h+k+1$, then by Lemma  \ref{obs-s1k}, $N[x]$ is a component of $G$, and then by the induction hypothesis, $e(G- N[x])\le \frac{3(h+k)}{h+k+2}(n-d(x)-1)$, and thus, by Lemma \ref{obs-k+2}, $e(G)\leq \frac{3(h+k)}{h+k+2}n\le f(n).$ \medskip

 Therefore we may assume $\Delta(G) \le h+k$. Moreover, if $h+k\le 4$, then $$e(G)\le \frac{n\Delta(G)}{2}\le \frac{n(h+k)}{2}\le   \frac{3n(h+k)}{h+k+2}\le f(n).$$ Hence, in the following, we always assume that $h+k\ge 5$. 

\subsection{ $h+k=5$ }
In this subsection, $(h,k)\in \{(1,4),(2,3)\}$, $f(n)=\frac{15}{7}n$, $\delta(G)\ge 3$ and $\Delta(G)\le  5$.  If $\Delta(G)\le 4$, then  $e(G)\le 2n< \frac{15}{7}n$, and if $n\le 7$, then $e(G)\le 3n-6\le \frac{15}{7}n$ clearly. So we assume that $n\ge8$ and $\Delta(G)= 5$.

 \begin{fact}\label{lem-s14}
	 If $G$ contains any copy of $5$-$5$ edge, then $e(G)\leq \frac{15}{7}n$.
\end{fact}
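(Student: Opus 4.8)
The plan is to reduce the whole statement to a single edge estimate on one component. Since $xy$ is a $5$-$5$ edge and here $h+k=5$, Claim~\ref{S-com} shows that $C:=S[xy]$ is a component of $G$. Put $a:=|S_{xy}|$; from $d(x)=d(y)=5$ we get $|S_x|=|S_y|=4-a$, hence $|C|=10-a\in\{6,\dots,10\}$. Because $C$ is a component, $e(G)=e(C)+e(G-C)$, and the induction hypothesis applied to $G-C$ (which has fewer than $n$ vertices, and is empty with no edges when $C=G$) gives $e(G-C)\le\frac{15}{7}(n-|C|)$. Thus it suffices to prove
$$e(C)\le\tfrac{15}{7}|C|.$$

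To bound $e(C)$ I would first record the local structure already used in Claim~\ref{S-com}: by Lemma~\ref{obs-xyz}, every $v\in S_x$ has $N(v)\subseteq N(y)$ and every $u\in S_y$ has $N(u)\subseteq N(x)$. Since $S_x\cap N(y)=S_y\cap N(x)=\emptyset$, both $S_x$ and $S_y$ are independent, and moreover $C-S_{xy}$ is bipartite with parts $\{x\}\cup S_y$ and $\{y\}\cup S_x$ (each of size $5-a$). The bipartite case of Lemma~\ref{thm-edge} gives $e(C-S_{xy})\le 2(10-2a)-4=16-4a$, while the edges meeting $S_{xy}$ number at most $\sum_{p\in S_{xy}}d(p)\le 5a$; altogether $e(C)\le 16+a$.

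Now I would split on $a$. For $a\le 1$ the inequality $16+a\le\frac{15}{7}(10-a)$ holds, so the claim follows at once. For $a\ge 3$ we have $|C|\le 7$, and the plain planar bound $e(C)\le 3|C|-6\le\frac{15}{7}|C|$ suffices (this is exactly the setting of Lemma~\ref{C-edge}). The only tight case is $a=2$, where $|C|=8$ and both estimates give only $e(C)\le 18$, one more than the required $e(C)\le 17$.

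I expect $a=2$ to be the main obstacle, and I would settle it by ruling out $e(C)=18$ through planarity. Writing $e(C)=9+e_1+e_2+e_3+e_4$, where $e_1,e_2,e_3,e_4$ count the edges inside $S_{xy}=\{p_1,p_2\}$, between $S_{xy}$ and $S_x=\{v_1,v_2\}$, between $S_{xy}$ and $S_y=\{u_1,u_2\}$, and between $S_x$ and $S_y$, one checks that $e_4\le 3$ (a fourth edge between $S_x$ and $S_y$ completes a $K_{3,3}$ with parts $\{x,u_1,u_2\}$ and $\{y,v_1,v_2\}$), and that $\Delta(G)\le 5$ forces $2e_1+e_2+e_3\le 6$. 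Hence $e(C)=18$ can occur only when $e_1=0$, $e_2+e_3=6$, $e_4=3$, and $d(p_1)=d(p_2)=5$, so each $p_i$ omits exactly one vertex of $\{v_1,v_2,u_1,u_2\}$. A short case analysis on the two omitted vertices (both in $S_y$, both in $S_x$, or one on each side) then produces in every case an independent triple, one of $\{y,v_1,v_2\}$ or $\{x,u_1,u_2\}$, together with three common neighbours drawn from $\{x,y,p_1,p_2\}\cup S_x\cup S_y$, i.e.\ a $K_{3,3}$, contradicting planarity. Therefore $e(C)\le 17\le\frac{15}{7}\cdot 8$, which closes the case and establishes the Fact.
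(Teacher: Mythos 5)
Your argument is correct, and its outer frame is the same as the paper's: $S[xy]$ is a component by Claim~\ref{S-com}, induction handles $G-S[xy]$, and everything reduces to bounding the edges of that one component. Where you genuinely diverge is the hard middle range of $|S_{xy}|$. The paper first invokes Lemma~\ref{obs-xy} to rule out $(h,k)=(2,3)$ when $1\le|S_{xy}|\le2$, and then makes a global extremal choice -- take the $5$-$5$ edge with $|S_{xy}|$ maximum -- which caps $e[S_{xy},S_x\cup S_y]$ at $2|S_{xy}|$ (a vertex of $S_{xy}$ with three neighbours in $S_x\cup S_y$ would yield a $5$-$5$ edge with a larger common neighbourhood) and lands directly on $e(G[S[xy]])\le 17$ when $|S_{xy}|=2$. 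You instead prove the uniform estimate $e(C)\le 16+a$ (using that $S_x$ and $S_y$ are independent by Lemma~\ref{obs-xyz}, which holds for both $(1,4)$ and $(2,3)$ and is slightly stronger than the $K_{1,h}$-freeness the paper records), which already settles $a\le1$; you dismiss $a\ge3$ with the raw planar bound as in Lemma~\ref{C-edge}; and you isolate $a=2$ as the unique tight case, closing it by showing that $e(C)=18$ forces all of $e_1=0$, $e_2+e_3=6$, $e_4=3$, whence one of $\{y,v_1,v_2\}$ or $\{x,u_1,u_2\}$ acquires three common neighbours and a $K_{3,3}$ appears. I checked that case analysis and it does close in every configuration (each $p_i$ adjacent to three of the four vertices of $S_x\cup S_y$ must cover a whole side, and $e_4=3$ supplies the missing third common neighbour when the two $p_i$ cover opposite sides). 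The trade-off is clear: your version stays entirely local to one component and needs no case split on $(h,k)$ and no extremal choice of edge, at the price of an explicit $K_{3,3}$ analysis that the paper's maximality trick avoids. Both versions rely on the standing assumptions $\delta(G)\ge3$, $\Delta(G)\le5$ and $n\ge8$, which are in force where the Fact is stated.
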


\noindent{\bf Proof of Fact \ref{lem-s14}.}  Let $xy$ be a $5$-$5$ edge in $G$. Then by Claim  \ref{S-com}, $S[xy]$ is a component of $G$. Then $e(G)=e(G-{S[xy]})+e(G[S[xy]])$. If $S[xy]\subset V(G)$, then by the induction hypothesis, we have $$
	e(G-{S[xy]})\le \frac{15}{7}(n-10).\eqno(2)$$

If $S_{xy}=\emptyset$, then $|S[xy]|=10$ and $|S_x|=|S_y|=4$. Since $G$ is $W_{h,k}$-free, $G[S_x]$ and $G[S_y]$ are $K_{1,h}$-free for $h\le 2$, and then $e(G[S_x])\le {|S_x|}/{2}=2$, $e(G[S_y])\le {|S_y|}/{2}=2$. Let $G'$ be the bipartite subgraph of $G[S[xy]]$ with the bipartition $S_x\cup\{y\}$ and $S_y\cup\{x\}$, then by Lemma \ref{thm-edge}, $e(G')\le 2(|S_x|+|S_y|+2)-4=16.$ So
$$e(G[S[xy]])=e(G[S_x])+e(G[S_y])+e(G')\le 2+2+16=20.\eqno(3)$$
If $S[xy]=V(G)$, then $n=|S[xy]|=10$, and then by (3), $e(G)\le20<\frac{150}{7}=\frac{15}{7}n.$
	So we can assume that $S[xy]\subset V(G)$. Then by (2) and (3), we have  $$e(G)=e(G-{S[xy]})+e(G[S[xy]])\leq\frac{15}{7}(n-10)+20=\frac{15}{7}n-\frac{10}{7}<\frac{15}{7}n.$$
So, in the following, we can assume that $S_{xy}\neq\emptyset$. Choose the $5$-$5$ edge $xy$ such that $|S_{xy}|$ is as large as possible. If $|S_{xy}|\ge3$, then
$$|S[xy]|=10-|S_{xy}|\le 7= \frac{6}{3-15/7}<8\le n~ (\tau:=15/7),\eqno(4)$$ which implies that $S[xy]\subset V(G)$, and then by induction hypothesis, $e(G-S[xy])\le \frac{15}{7}(n-|S[xy]|)$. Thus, by Lemma \ref{C-edge} and (4), we have $e(G)\le\frac{15}{7}n=f(n)$. \medskip

 Now we only need to consider the case that $1\le |S_{xy}|\le2$, then $h=1$ and $k=4$ (for otherwise, $h=2$ and $k=3$, and then by Lemma  \ref{obs-xy}, we have $|S_{xy}|\ge d(y)-2=3$, a contradiction). Furthermore, $S_x$ and $S_y$ are independent as $G$ is $W_{1,4}$-free, and then by Lemma \ref{thm-edge}, $e[S_x,S_y]=e(G[S_x\cup S_y])\le 2|S_x\cup S_y|-4=4(3-|S_{xy}|)$. Moreover, if $|S_{xy}|=2$, $|S_x|=|S_y|=2$, then $e[S_x,S_y]\le 3$ (for otherwise, $e[S_x,S_y]=4$, which implies there exists a $K_{3,3}$ with the bipartition $S_x\cup \{y\}$ and $S_y\cup \{x\}$, a contradiction). So $$e[S_x,S_y]\le 4(3-|S_{xy}|)-|S_{xy}|+1=13-5|S_{xy}|.\eqno(5)$$
Next we calim that $$e[S_{xy}, S_x\cup S_y]\le 2|S_{xy}|.\eqno(6)$$
 Suppose not, then there exists $z\in S_{xy}$ such that $|N(z)\cap ({S_x\cup S_y})|\ge 3$. So $d(z)\ge 5$, furthermore, $d(z)= 5$ as $\Delta(G)=5$, that is $xz$ is a $5$-$5$ edge. Since $|N(z)\cap ({S_x\cup S_y})|\ge 3$,  we can assume  $|N(z)\cap {S_x}|\ge 2$, then $|N(z)\cap {N(x)}|\ge 3$, that is $|S_{xz}|\ge3>2\ge |S_{xy}|$, a contradiction with the choice of $xy$.\medskip

Note that $|S_{xy}|\le2$, then $e(G[S_{xy}])\le |S_{xy}|-1$. Thus by (2), (5) and (6),
 \begin{align*}
	e(G)
	&=e(G-{S[xy]})+9+e(G[S_{xy}])+e[S_{xy}, S_x\cup S_y]+e[S_x,S_y]\\
	&\leq\frac{15}{7}(n-(10-|S_{xy}|))+9+(|S_{xy}|-1)+2|S_{xy}|+(13-5|S_{xy}|)\\
	&=\frac{15}{7}n-\frac{3-|S_{xy}|}{7}<\frac{15}{7}n-\frac{1}{7}<\frac{15}{7}n.
\end{align*}Therefore, the proof of Fact \ref{lem-s14} is complete. { \hfill $\Box$}

\begin{fact}\label{lem-5k5}
	If $G$ contains any copy of $5$-$l$-$5$ path for $3\le l\le 4$, then $e(G)\leq \frac{15}{7}n$.
\end{fact}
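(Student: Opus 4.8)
The plan is to begin with a case split on whether the two endpoints $x$ and $z$ of the given $5$-$l$-$5$ path $xyz$ are adjacent, and to reduce the harder case to the structural description provided by Claim~\ref{S-3path}. If $xz\in E(G)$, then since $d(x)=d(z)=5$ the edge $xz$ is a $5$-$5$ edge, and Fact~\ref{lem-s14} immediately yields $e(G)\le\frac{15}{7}n$. Hence I may assume $xz\notin E(G)$ for the rest of the argument.

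Under the assumption $xz\notin E(G)$, I would invoke Claim~\ref{S-3path}. Its hypotheses are exactly met here: $d(x)=d(z)=5=h+k$ and $d(y)=l$ with $3\le l\le 4=h+k-1$. The claim then gives $N(x)=N(z)$ and, more importantly, that $C:=N[x]\cup\{z\}$ is a component of $G$. Since $z\notin N[x]$ (as $z\neq x$ and $xz\notin E(G)$), this component has exactly $|C|=|N[x]|+1=d(x)+2=7$ vertices. This is the crucial extraction step: a $5$-$l$-$5$ path with non-adjacent endpoints forces a small isolated chunk of the graph.

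With the component $C$ in hand, I would close the argument using Lemma~\ref{C-edge} with $\tau=\frac{15}{7}$. The key arithmetic check is $\frac{6}{3-\tau}=\frac{6}{6/7}=7=|C|$, so the size condition $|C|\le\frac{6}{3-\tau}$ holds (with equality). Because $n\ge 8$, we have $|C|=7\le n-1$, so $G-C$ is a planar $W_{h,k}$-free graph on $n-7\ge 1$ vertices; the induction hypothesis then gives $e(G-C)\le\frac{15}{7}(n-7)=\tau(n-|C|)$. Lemma~\ref{C-edge} now delivers $e(G)\le\frac{15}{7}n=f(n)$, as required.

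The argument is short because the structural lemmas do the heavy lifting, so there is no genuine obstacle beyond bookkeeping. The only point requiring care is the case distinction on the adjacency of $x$ and $z$: recognizing that the adjacent case is absorbed by Fact~\ref{lem-s14} and that the non-adjacent case is precisely the setting of Claim~\ref{S-3path}. The pleasant coincidence that $\frac{6}{3-15/7}=7$ matches the component size exactly is what makes Lemma~\ref{C-edge} applicable without any slack, and I would flag that identity explicitly in the write-up.
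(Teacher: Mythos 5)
Your proposal is correct and follows essentially the same route as the paper: dispose of the case $xz\in E(G)$ via Fact~\ref{lem-s14}, apply Claim~\ref{S-3path} to get the $7$-vertex component $N[x]\cup\{z\}$, and close with the induction hypothesis and Lemma~\ref{C-edge} using $\frac{6}{3-15/7}=7$. No gaps.
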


\noindent{\bf Proof of Fact \ref{lem-5k5}}.  Let $xyz$ be a $5$-$l$-$5$ path with $d(x)=d(z)=5$ and $d(y)=l$ for $3\le l\le 4$. By Fact \ref{lem-s14}, we can suppose that $xz\notin E(G)$. Then by Claim  \ref{S-3path}, we have $N(x)=N(z)$ and $N[x]\cup \{z\}$ is a component of $G$. Note that $|N[x]\cup \{z\}|=7=\frac{6}{3-15/7}<8\le n$ ($\tau:=15/7$), that is ${N[x]\cup \{z\}}\subset V(G)$, then by the induction hypothesis, $e(G-({N[x]\cup \{z\}}))\le \frac{15}{7}(n-7)$. Thus, by Lemma \ref{C-edge}, $e(G)\leq \frac{15}{7}n$.{ \hfill $\Box$}

\begin{fact}\label{lem-54star}
	If $G$ contains any copy of $5$-$4$ star, then $e(G)\leq \frac{15}{7}n$.
\end{fact}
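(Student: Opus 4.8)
The plan is to locate the degree-$5$ centre of the star, show that its closed second neighbourhood is an entire component via Lemma~\ref{obs-xyz}, bound the edges inside that component locally, and then close by induction. Let $x$ be the centre of the $5$-$4$ star, so that $d(x)=5=h+k$ and $A:=N(x)=\{x_1,\dots,x_5\}$ is a set of five vertices each of degree $4$. By Facts~\ref{lem-s14} and~\ref{lem-5k5} I may assume that $G$ contains neither a $5$-$5$ edge nor a $5$-$l$-$5$ path with $3\le l\le 4$, for otherwise $e(G)\le\frac{15}{7}n$ is already in hand.

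Since $\delta(G)\ge 3\ge h+1$ and $d(x)=h+k$, Lemma~\ref{obs-xyz} applies. Writing $B:=N_2(x)$ and $C:=G[N[x]\cup N_2(x)]$, this lemma tells me that $C$ is a component of $G$ and that $N(y)\subseteq A$ for every $y\in B$. Consequently $B$ is an independent set, every $y\in B$ spends all of its (at least $3$) edges inside $A$, and in fact $d(y)\le 4$: a vertex $y\in B$ of degree $5$ adjacent to some $x_i$ would give a $5$-$4$-$5$ path $x$-$x_i$-$y$, contradicting the reduction above. Thus $C$ consists of $x$, the five degree-$4$ vertices of $A$, and an independent set $B$ each of whose vertices has between $3$ and $4$ neighbours in $A$.

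Now I count edges inside $C$. There are exactly $5$ edges from $x$ to $A$ and no edges inside $B$, so $e(C)=5+e(G[A])+e[A,B]$; summing the degrees of the $x_i$ (each $4$, with one edge going to $x$) gives $2e(G[A])+e[A,B]=15$, whence $e(C)=\tfrac12\bigl(25+e[A,B]\bigr)$. Since $e[A,B]=\sum_{y\in B}d(y)\le 4|B|$ and $|C|=6+|B|$, the inequality $e(C)\le\frac{15}{7}|C|$ reduces to $7(25+4|B|)\le 30(6+|B|)$, i.e.\ $0\le 5+2|B|$, which always holds. If $C=V(G)$ this is the desired bound; otherwise $G-C$ is a planar $W_{h,k}$-free graph on fewer vertices, so the induction hypothesis gives $e(G-C)\le\frac{15}{7}(n-|C|)$, and adding the two estimates yields $e(G)\le\frac{15}{7}n$.

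The main obstacle — really the only delicate point — is securing the degree cap $d(y)\le 4$ on the vertices of $B$, which is exactly what forces $e[A,B]\le 4|B|$ rather than the weaker $5|B|$; under the weaker bound the local estimate $e(C)\le\frac{15}{7}|C|$ would fail once $|B|\ge 2$. This is where the prior reduction to a $5$-$4$-$5$-path-free graph (Fact~\ref{lem-5k5}) is indispensable, and Lemma~\ref{obs-xyz}, which both makes $C$ a component and confines $N(B)$ to $A$, is the structural engine behind the whole argument.
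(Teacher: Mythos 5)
Your proof is correct and follows essentially the same route as the paper's: reduce via Facts~\ref{lem-s14} and~\ref{lem-5k5} to cap the degrees in $N_2(x)$, use Lemma~\ref{obs-xyz} to make $N[x]\cup N_2(x)$ a component with $N(y)\subseteq N(x)$ for $y\in N_2(x)$, count its edges, and close by induction. Your exact handshake count $e(C)=\tfrac12(25+e[A,B])$ with $e[A,B]\le 4|B|$ even lets you skip two auxiliary steps the paper needs (Lemma~\ref{obs-star} to guarantee $N_2(x)\neq\emptyset$, and the claim that at most one vertex of $N_2(x)$ has degree $4$), since your final inequality holds for all $|B|\ge 0$.
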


\noindent{\bf Proof of Fact \ref{lem-54star}.}
Let $x\in V(G)$ with $d(x)=5=h+k$, and let $x_i\in N(x)$ with $d(x_i)=4$ for $1\le i\le 5$. Then by Lemma \ref{obs-star}, we have $N_2(x)\neq\emptyset$. Let $N_2(x)=\{y_{1},y_{2},\dots,y_t\}$ ($t\ge 1$). If $d(y_j)=5$ for some $1\le j\le t$, then $xx_iy_j$ is a $5$-$4$-$5$ path, and then  by Fact \ref{lem-5k5}, $e(G)\le \frac{15}{7}n$. So we may assume that $3\le d(y_j)\le 4$ for all $1\le j\le t$.

Let $Y=\{y_j~|~d(y_j)=4, 1\le j\le t\}$. We claim that $|Y|\le 1$. Suppose not, let $y_1,y_2\in N_2(x)$ with $d(y_1)=d(y_2)=4$, then by Lemma  \ref{obs-xyz}, $N(y_1), N(y_2)\subseteq N(x)$, it follows that $s:=|N(y_1)\cap N(y_2)|\ge 3$ as $d(x)=5$, and thus there is a $K_{3,s}$ in $G$ with the bipartition $\{x,y_1,y_2\}$ and $N(y_1)\cap N(y_2)$, a contradiction.  By Lemma \ref{obs-xyz}, $G[N[x]\cup N_2(x)]$ is a component of $G$. So we have $$e(G[N[x]\cup N_2(x)])\le \frac{1}{2}(5+20+4|Y|+3(t-|Y|))=\frac{25+3t+|Y|}{2}\le \frac{26+3t}{2}.\eqno(7)$$ Therefore, by the induction hypothesis and (7), we obtain that
\begin{align*}
	e(G)&=e(G-({N[x]\cup N_2(x)}))+e(G[N[x]\cup N_2(x)])\\
	&\le \frac{15}{7}(n-6-t)+  \frac{26+3t}{2}=\frac{15}{7}n-\frac{9t-2}{14}<\frac{15}{7}n
\end{align*}
as desired, where the last inequality follows from $t\ge 1$.{ \hfill $\Box$}\medskip

Recall that $V_i=\{v ~|~ v\in V(G)\ \text{and} \ d(v)=i\}$ and $n_i=|V_i|$. By Fact \ref{lem-s14}, we can assume that $V_5$ is an independent set in $G$. And by Fact \ref{lem-5k5}, we can assume that  $G$ contains no $5$-$l$-$5$ path for $3\le l\le 4$, that is, every pair of $V_5$ has no common neighbors. On the other hand,  by Fact \ref{lem-54star}, we can assume that $G$ contains no $5$-$4$ star, which implies that each vertex of $V_5$ is adjacent to at least one vertex of $V_3$. Then there exists an injective map from $V_5$ to $V_3$. It follows that $n_{5} \leq n_{3}$.\medskip

Note that $\delta\ge 3$ and $\Delta=5$, then $n=\sum\limits_{i=3}^{5}n_{i}$, and thus
\begin{align*}
	e(G)&=\frac{1}{2}\sum\limits_{v\in V(G)}d(v)=\frac{1}{2}\sum\limits_{i=3}^{5}i\cdot n_{i}=\frac{1}{2}(5n_{5}+3n_{3}+4(n-n_{5}-n_{3}))\\
	&=\frac{1}{2}(4n + n_{5}-n_{3})\leq 2n < \frac{15}{7}n.
\end{align*}


\subsection{$h+k=6$}

In this subsection, $(h,k)\in \{(1,5), (2,4)\}$, $f(n)=\frac{5}{2}n$, $\delta(G)\ge3$ and $\Delta(G)\le 6$. If $\Delta(G)\le 5$, then $e(G) \leq \frac{5}{2}n$; and if $n\le 12$, then by Lemma \ref{thm-edge}, $e(G) \leq3n-6\le\frac{5}{2}n$ clearly. So we assume that $n\ge 13$ and $\Delta(G)= 6$.

\begin{fact}\label{lem-s15}
	
If $G$ contains any copy of $6$-$6$ edge, then $e(G)\leq \frac{5}{2}n$.
	
\end{fact}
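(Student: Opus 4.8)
The plan is to follow the template already established in Facts~\ref{lem-s14} and~\ref{lem-5k5}, adapting it from the $h+k=5$ case to $h+k=6$. I would start by letting $xy$ be a $6$-$6$ edge in $G$, so $d(x)=d(y)=6=h+k$. By Claim~\ref{S-com}, $S[xy]$ is a component of $G$. The strategy is to bound $e(G[S[xy]])$ from above and, whenever $S[xy]\subset V(G)$, apply the induction hypothesis to $G-S[xy]$ together with Lemma~\ref{C-edge} (or a direct computation) to conclude $e(G)\le\frac{5}{2}n$. The natural way to organize the argument is by the size of the common neighborhood $|S_{xy}|$, exactly as in Fact~\ref{lem-s14}.

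First I would dispose of the case $S_{xy}=\emptyset$. Here $|S[xy]|=14$ with $|S_x|=|S_y|=5$, and since $G$ is $W_{h,k}$-free with $h\le 2$, the induced graphs $G[S_x]$ and $G[S_y]$ are $K_{1,h}$-free, giving $e(G[S_x]),e(G[S_y])\le 2$ (they are matchings when $h\le 2$). The bipartite graph between $S_x\cup\{y\}$ and $S_y\cup\{x\}$ has at most $2(|S_x|+|S_y|+2)-4=20$ edges by Lemma~\ref{thm-edge}, so $e(G[S[xy]])\le 2+2+20=24$. If $S[xy]=V(G)$ then $n=14$ and $e(G)\le 24<35=\frac{5}{2}n$; otherwise the induction hypothesis gives $e(G-S[xy])\le\frac{5}{2}(n-14)$, and $\frac{5}{2}(n-14)+24=\frac{5}{2}n-11<\frac{5}{2}n$. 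Next, choosing the $6$-$6$ edge with $|S_{xy}|$ maximal, I would handle large $|S_{xy}|$ via Lemma~\ref{C-edge}: once $|S_{xy}|$ is large enough that $|S[xy]|=14-|S_{xy}|\le\frac{6}{3-5/2}=12$, i.e. $|S_{xy}|\ge 2$, the component bound applies with $\tau=5/2$ and yields $e(G)\le\frac{5}{2}n$ directly (using $n\ge 13$ to guarantee $S[xy]\subset V(G)$).

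The remaining and most delicate case is small positive $|S_{xy}|$, namely $|S_{xy}|=1$. As in Fact~\ref{lem-s14}, I expect Lemma~\ref{obs-xy} to force $(h,k)=(1,5)$: if $(h,k)=(2,4)$ then $|S_{xy}|\ge d(y)-h=4$, contradicting $|S_{xy}|=1$. With $h=1$, the sets $S_x$ and $S_y$ are independent (else a $W_{1,5}$ appears), so the cross edges $e[S_x,S_y]$ are bounded by Lemma~\ref{thm-edge}, and I would bound $e[S_{xy},S_x\cup S_y]$ by exploiting the maximality of $|S_{xy}|$ together with $\Delta(G)=6$ and the $K_{3,3}$-freeness of $G$ (a vertex $z\in S_{xy}$ with too many neighbors in $S_x\cup S_y$ would produce either a larger $S_{xz}$ or a forbidden $K_{3,3}$). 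Summing the contributions of $S_{xy}$, the cross edges, the nine edges incident to the core, and the inductive bound on $G-S[xy]$ should give a total strictly below $\frac{5}{2}n$. The main obstacle I anticipate is getting the bipartite and cross-edge counts tight enough: with $|S_x|+|S_y|=11$ the $K_{3,3}$-free bound on the bipartite part must be combined carefully with the matching bounds and the maximality argument so that the slack outweighs the $9+(|S_{xy}|-1)$ core terms. I would watch the boundary configuration $|S_{xy}|=1,|S_x|=|S_y|=5$ most closely, since that is where the estimate is tightest.
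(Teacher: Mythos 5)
Your overall strategy---identify $S[xy]$ as a component via Claim~\ref{S-com}, then combine the induction hypothesis with Lemma~\ref{C-edge}---is the same as the paper's, but a miscount of $|S[xy]|$ sends you into an unnecessary case split whose hardest case you never actually close. Since $d(x)=d(y)=6$, $x\in N(y)$ and $y\in N[x]$, we have $|S[xy]|=|N[x]|+|N(y)|-|S_{xy}\cup\{x\}|=7+6-(|S_{xy}|+1)=12-|S_{xy}|$, not $14-|S_{xy}|$; equivalently $|S_x|=|S_y|=5-|S_{xy}|$, so even your own decomposition gives $2+|S_x|+|S_y|+|S_{xy}|=12-|S_{xy}|$ (note that in your first case you write $|S_x|=|S_y|=5$ yet conclude $|S[xy]|=14$, which is already inconsistent). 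Consequently $|S[xy]|\le 12=\frac{6}{3-5/2}$ for \emph{every} value of $|S_{xy}|$, including $0$ and $1$, and since $n\ge 13$ we get $S[xy]\subset V(G)$. Lemma~\ref{C-edge} with $\tau=5/2$ then yields $e(G)\le \frac{5}{2}n$ in a single step; this is precisely the paper's proof. The case analysis modelled on Fact~\ref{lem-s14} is not needed here: it was needed there only because $\frac{6}{3-15/7}=7$, while $|S[xy]|=10-|S_{xy}|$ exceeds $7$ when $|S_{xy}|\le 2$.

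The concrete gap in your write-up is the case $|S_{xy}|=1$: you correctly reduce it to $(h,k)=(1,5)$ via Lemma~\ref{obs-xy}, but then only describe the intended bookkeeping and explicitly flag that you have not verified that the bipartite, matching, and cross-edge bounds add up below $\frac{5}{2}n$. As submitted, that case is a plan rather than a proof. Fortunately the gap is an artifact of the arithmetic error: once $|S[xy]|=12-|S_{xy}|$ is corrected, the $|S_{xy}|\in\{0,1\}$ cases are absorbed by the same one-line application of Lemma~\ref{C-edge} as the rest, and nothing further needs to be checked.
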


\noindent{\bf Proof of Fact \ref{lem-s15}.}
Let $xy$ be the $6$-$6$ edge in $G$, then by Claim \ref{S-com}, $S[xy]$ is a component of $G$. Note that $$|S[xy]|=12- |S_{xy}|\le12=\frac{6}{3-5/2}< 13\le n~ (\tau:=5/2),$$ and hence $S[xy]\subset V(G)$. By the induction hypothesis, $e(G-{S[xy]})\le \frac{5}{2}(n-|S[xy]|)$. So $e(G)\leq\frac{5}{2}n$ follows from Lemma \ref{C-edge}.
{ \hfill $\Box$}
	
\begin{fact}\label{lem-6k6path}
	 If $G$ contains any copy of $6$-$l$-$6$ path for $3\le l\le 5$, then $e(G)\leq \frac{5}{2}n$.	
\end{fact}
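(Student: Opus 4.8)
The plan is to mirror the proof of Fact~\ref{lem-5k5} from the previous subsection, adapting the numerics to $\tau:=\frac{5}{2}$. Let $xyz$ be a $6$-$l$-$6$ path with $d(x)=d(z)=6$ and $d(y)=l$ for $3\le l\le 5$. First I would dispose of the case $xz\in E(G)$: then $xz$ is a $6$-$6$ edge, and Fact~\ref{lem-s15} immediately yields $e(G)\le\frac{5}{2}n$. So I may assume $xz\notin E(G)$, and in particular $z\in N_2(x)$.

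With $xz\notin E(G)$, $d(x)=d(z)=h+k=6$, and $3\le l\le 5=h+k-1$, the hypotheses of Claim~\ref{S-3path} are exactly met, so I obtain $N(x)=N(z)$ and that $C:=N[x]\cup\{z\}$ is a component of $G$. Since $N(x)=N(z)$ and $d(x)=6$ we have $|N[x]|=7$, and because $z\in N_2(x)\setminus N[x]$ it follows that $|C|=8$.

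Now I would invoke Lemma~\ref{C-edge} with $\tau=\frac{5}{2}$. The threshold is $\frac{6}{3-\tau}=\frac{6}{1/2}=12$, so the required bound $|C|=8\le 12$ holds with room to spare; in particular $|C|=8<13\le n$ forces $C\subsetneq V(G)$, so the induction hypothesis applies to $G-C$ and gives $e(G-C)\le\frac{5}{2}(n-8)$. Lemma~\ref{C-edge} then yields $e(G)\le\frac{5}{2}n$, as desired.

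There is no genuine obstacle here: the argument is structurally identical to Fact~\ref{lem-5k5}, and the only thing to verify is that the component $C$ is small enough for Lemma~\ref{C-edge}. Whereas the $5$-$l$-$5$ case had $|C|=7=\frac{6}{3-15/7}$ sitting exactly at the threshold, here $|C|=8$ lies comfortably below the threshold $12$, so the estimate is not even tight. The single point worth double-checking is that Claim~\ref{S-3path} genuinely applies, i.e.\ that the middle-vertex degree satisfies $l\le h+k-1=5$, which is precisely the range stated in the Fact; the ambient hypotheses $\delta(G)\ge 3\ge h+1$ and $\Delta(G)\le 6$ of this subsection supply everything else that Claim~\ref{S-3path} needs.
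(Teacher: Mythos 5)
Your proof is correct and follows essentially the same route as the paper: reduce to $xz\notin E(G)$ via Fact~\ref{lem-s15}, apply Claim~\ref{S-3path} to get the component $N[x]\cup\{z\}$ of size $8$, and conclude with the induction hypothesis and Lemma~\ref{C-edge} with $\tau=\frac{5}{2}$. No gaps.
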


\noindent{\bf Proof of Fact \ref{lem-6k6path}.}
Let $xyz$ be a $6$-$l$-$6$ path in $G$ with $d(x)=d(z)=6=h+k$ and $d(y)=l$ for $3\le l\le 5$. By Fact \ref{lem-s15}, we can suppose that $xz\notin E(G)$. Then by Claim  \ref{S-3path}, $N(x)=N(z)$ and $N[x]\cup \{z\}$ is a component of $G$. Note that $|N[x]\cup \{z\}|=8<\frac{6}{3-5/2}=12<13\le n$  ($\tau:=5/2$), that is $N[x]\cup \{z\} \subset V(G)$, and then by the induction hypothesis, $e(G-({N[x]\cup \{z\}}))\le \frac{5}{2}(n-8)$. Thus, by Lemma \ref{C-edge}, $e(G)\leq \frac{5}{2}n$.{ \hfill $\Box$}

\begin{fact}\label{lem-65star}
If $G$ contains any copy of $6$-$5$ star, then $e(G)\leq \frac{5}{2}n$.	
\end{fact}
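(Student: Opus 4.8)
The plan is to mirror the proof of Fact~\ref{lem-54star}, replacing the role of $5$-$4$ stars by $6$-$5$ stars. Let $x$ be the center of the $6$-$5$ star, so $d(x)=6=h+k$ and, since $\Delta(G)=6$, we have $N(x)=\{x_1,\dots,x_6\}$ with $d(x_i)=5$ for every $i$. Because $h+k=6\ge5$ and $\delta(G)\ge3\ge h+1$, Lemma~\ref{obs-star} gives $N_2(x)\neq\emptyset$; write $N_2(x)=\{y_1,\dots,y_t\}$ with $t\ge1$. Since $d(x)=h+k$, Lemma~\ref{obs-xyz} applies: $N(y_j)\subseteq N(x)$ for each $j$, and $H:=G[N[x]\cup N_2(x)]$ is a component of $G$ on $7+t$ vertices.

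First I would rule out large second-neighbour degrees. If some $d(y_j)=6$, then choosing a common neighbour $x_i\in N(x)\cap N(y_j)$ gives a $6$-$5$-$6$ path $x x_i y_j$, so Fact~\ref{lem-6k6path} finishes. Hence I may assume $3\le d(y_j)\le5$ for all $j$; set $Y_i=\{y_j: d(y_j)=i\}$ for $i\in\{3,4,5\}$ and $a=|Y_3|$, $b=|Y_4|$, $c=|Y_5|$, so $a+b+c=t$. The key structural input is $c\le1$: if $y,y'\in N_2(x)$ both had degree $5$, then $N(y),N(y')\subseteq N(x)$ forces $|N(y)\cap N(y')|\ge 5+5-6=3$, and three common neighbours together with $\{x,y,y'\}$ would give a $K_{3,3}$, contradicting that $G$ is $K_{3,3}$-free.

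Since $H$ is a component, every vertex keeps its $G$-degree inside $H$, so
\[
e(H)=\tfrac12\Big(d(x)+\sum_{i=1}^6 d(x_i)+\sum_{j=1}^t d(y_j)\Big)=\tfrac12\big(36+3a+4b+5c\big).
\]
Applying the induction hypothesis to $G-V(H)$ (a $W_{h,k}$-free planar graph on $n-7-t<n$ vertices, the bound being trivial if $V(H)=V(G)$) and simplifying with $t=a+b+c$, a short computation collapses the estimate to
\[
e(G)\le \tfrac52(n-7-t)+e(H)=\tfrac52 n+\tfrac{1-2a-b}{2},
\]
so it remains to verify $2a+b\ge1$. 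If $c=0$ this is immediate from $a+b=t\ge1$. The delicate case is $c=1$, where a priori one could have $a=0$; here the handshake lemma applied to $H$ forces $36+3a+4b+5c$ to be even, i.e.\ $a\equiv c\pmod 2$, whence $a$ is odd and $a\ge1$, giving $2a+b\ge2$. Thus $e(G)\le\frac52 n$.

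The main obstacle — and where this argument genuinely departs from the $5$-$4$ case — is precisely this final case analysis: unlike for $5$-$4$ stars, the second neighbours now admit three possible degrees $(3,4,5)$, and a crude degree count alone fails to close the gap when $N_2(x)$ consists of a single degree-$5$ vertex. Combining the $K_{3,3}$-freeness bound $c\le1$ with the parity (handshake) constraint on the component $H$ is exactly what eliminates that borderline configuration, and arranging for these two inputs to interact correctly is the crux of the proof.
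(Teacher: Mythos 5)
Your proof is correct and follows the paper's overall strategy: isolate the component $H=G[N[x]\cup N_2(x)]$ via Lemmas~\ref{obs-star} and~\ref{obs-xyz}, dispose of degree-$6$ second neighbours through Fact~\ref{lem-6k6path}, cap the number of degree-$5$ second neighbours at one by $K_{3,3}$-freeness, and induct on $G-V(H)$. The one place you genuinely diverge is the borderline configuration with exactly one degree-$5$ vertex in $N_2(x)$: the paper closes it structurally, showing $|N_2(x)|\ge 2$ by observing that if $N_2(x)=\{z_1\}$ with $N(z_1)=\{x_1,\dots,x_5\}$, then the four neighbours of $x_6$ other than $x$ all lie in $N(z_1)$, producing a $K_{3,4}\supseteq K_{3,3}$. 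You instead apply the handshake lemma to the component $H$ to force the number of degree-$3$ second neighbours to have the same parity as the number of degree-$5$ ones, hence to be at least one when $c=1$. Both arguments are valid; yours is arguably cleaner, since it works with the exact degree sum $36+3a+4b+5c$ rather than the paper's upper bound $36+5+4(s-1)$ and avoids the extra structural case analysis. (Minor slip: $|N(y)\cap N(y')|\ge 5+5-6=4$, not $3$; either way three common neighbours exist and $K_{3,3}$ follows.)
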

	
\noindent{\bf Proof of Fact \ref{lem-65star}.} Let $x\in V(G)$ with $d(x)=h+k=6$ and let $x_i\in N(x)$ with $d(x_i)=5$ for $1\le i\le6$. Then  by Lemma \ref{obs-star}, we have $N_2(x)\neq\emptyset$, and then by Lemma \ref{obs-xyz}, $N[x]\cup N_2(x)$ is component of $G$. Let $N_2(x)=\{z_1,z_2,\dots,z_s\}$, where $s\ge1$. Then $2e(G[N[x]\cup N_2(x)])=d(x)+\sum_{i=1}^5d(x_i)+\sum_{j=1}^sd(z_j)=36+\sum_{j=1}^sd(z_j)$. If $d(z_j)=6$, then $G$ contains a $6$-$5$-$6$ path, and thus by Fact \ref{lem-6k6path}, $e(G)\le \frac{5}{2}n$. So we may assume that $3\le d(z_j)\le 5$ for all $1\le j\le s$. Let $Z=\{z_j~|~d(z_j)=5,1\le j\le s\}$.

If $Z=\emptyset$, that is $ d(z_j)\le 4$ for $1\le j\le s$, then
$$e(G[N[x]\cup N_2(x)])\le 18+2s \le\frac{5(7+s)}{2},\eqno(8)$$ where the last inequality follows from $s\ge1$.
	
If $Z\neq\emptyset$, then we first claim that $|Z|=1$ (for otherwise, let $z_1,z_2\in Z$, then by Lemma  \ref{obs-xyz}, we have $N(z_j)\subseteq N(x)$ for $1\le j\le2$, and thus $a:=|N(z_1)\cap N(z_2)|\ge 4$, which implies that $G$ contains a $K_{3,a}$ with the bipartition $N(z_1)\cap N(z_2)$ and $\{x,y_1,y_2\}$, a contradiction). So we can assume $Z=\{z_1\}$ and $N(z_1)=\{x_1,\ldots,x_5\}$. Next, we claim that $s\ge2$ (for otherwise, since $N(x_6)\setminus\{x\}\subset\{x_1,\ldots,x_5\}$ and $b:=|N(x)\cap N(x_6)|=4$ as $d(x_6)=5$, $G$ contains a $K_{3,b}$ with the bipartition  $\{x,x_6,z_1\}$ and $N(x)\cap N(x_6)$, a contradiction). Thus we can obtain that $$e(G[N[x]\cup N_2(x)])\le \frac{36+5+4(s-1)}{2}=\frac{37+4s}{2}\le\frac{5(7+s)}{2}\eqno(9)$$ for $s\ge2$.

Recall that $N[x]\cup N_2(x)$ is a component of $G$, then by the induction hypothesis, together with (8) and (9), we have \begin{align*}
e(G)&=e(G-({N[x]\cup N_2(x)}))+e(G[N[x]\cup N_2(x)])\\
&\le \frac{5}{2}(n-7-s)++e(G[N[x]\cup N_2(x)])\\
&\le\frac{5}{2}(n-7-s)+\frac{5}{2}(7+s)=\frac{5}{2}n.
\end{align*}Therefore, the proof of Fact \ref{lem-65star} is complete.
\hfill $\Box$ \medskip 

Recall that $V_i=\{v ~|~ v\in V(G)\ \text{and} \ d(v)=i\}$ and $n_i=|V_i|$. Then $V_i\neq \emptyset$ for $3\le i\le 6$ and $V_i=\emptyset$ otherwise. By Fact \ref{lem-s15},  we may assume that $V_6$ is an independent set in $G$. And by Fact \ref{lem-6k6path}, we can assume that  $G$ contains no $6$-$l$-$6$ path for $3\le l\le5$, which implies that every pair of $V_6$ has no common neighbors. On the other hand, we may suppose that $G$ contains no $6$-$5$ star by Fact \ref{lem-65star}, that is, each vertex of $V_6$ is adjacent to at least one vertex of $V_4\cup V_3$. It implies that there exists an injective map from $V_6$ to $V_4\cup
V_3$. Thus we have  $n_{6} \leq n_4+n_{3}$.\medskip

Note that $n=\sum\limits_{i=3}^{6}n_{i}$, then
\begin{align*}
	e(G)&=\frac{1}{2}\sum\limits_{i=3}^{6}i\cdot n_{i}=\frac{1}{2}(6n_{6}+4n_{4}+3n_{3}+5(n-n_6-n_{4}-n_{3}))\\
	&=\frac{1}{2}(5n + n_{6}-n_{4}-2n_{3})\leq \frac{5}{2}n.
\end{align*}

\subsection{$h=2,~k=5$}

In this subsection, we will show that $\ex_{\p}(n,W_{2,5})\le \frac{17}{6}n$.

Recall that $\delta(G)\ge3$ and $\Delta(G)\le7$. If  there is a vertex  $x\in V(G)$ such that $d(x)=7=h+k$, then by Lemma  \ref{obs-xyz}, $N(u)\subseteq N(x)$ for $u\in N_2(x)$, and $N[x]\cup N_2(x)$ is a component of $G$. So $|N(u)\cap N(x)|=d(u)\ge 3$ for $u\in N_2(x)$. Therefore $3|N_2(x)| \le e[N(x),N_2(x)]\le d(x)(d(x)-1)=42,$ which implies that $|N_2(x)|\le14$. Thus $$|N[x]\cup N_2(x)|=1+d(x)+|N_2(x)|\le22<36=\frac{6}{3-17/6}\le n,$$ then by Lemma \ref{C-edge}, together with the induction hypothesis,
$e(G)\le\frac{17}{6}n.$ Hence we have the following assumption.

\begin{assm}\label{d=6}
	$\Delta(G)\le 6$.
\end{assm}

If $d(x)+d(y)\le11$ for all $xy\in E(G)$, then by Cauchy Inequality, we have $$11e(G)\ge \sum_{xy\in E(G)}(d(x)+d(y))=\sum_{x\in V(G)}(d(x))^2\ge \frac{(\sum_{x\in V(G)}d(x))^2}{n}=\frac{4(e(G))^2}{n},$$ that is $e(G)\le \frac{11}{4}n< \frac{17}{6}n$. Therefore we have the following assumption. 

\begin{assm}\label{xy=12}
	There exists an edge $xy\in E(G)$ such that $d(x)+d(y)\ge12$.
\end{assm}

By Assumptions \ref{d=6} and \ref{xy=12}, $G$ contains a $6$-$6$ edge $xy$. 

\begin{claim}\label{fact-2}
$|N(u)\cap (S_x\cup \overline{S[xy]})|\le1$ for each $u\in S_x$ and $|N(v)\cap (S_y\cup \overline{S[xy]})|\le1$ for each $v\in S_y$. Furthermore, $e[S_x,S_x\cup\overline{S[xy]}]\le |S_x|$ and $e[S_y,S_y\cup\overline{S[xy]}]\le |S_y|$.
\end{claim}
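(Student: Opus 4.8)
The plan is to argue directly by exhibiting a forbidden copy of $W_{2,5}$, rather than appealing to the structural tools of Lemma~\ref{obs-xyz} and Claims~\ref{S-com}--\ref{S-3path}: those all require a vertex of degree at least $h+k=7$, whereas here we only have the $6$-$6$ edge $xy$ with $d(x)=d(y)=6=(h+k)-1$, so they are not available. The key observation that makes a direct construction work is that since $d(y)=6$, the set $N(y)\setminus\{x\}=S_{xy}\cup S_y$ has exactly $d(y)-1=5=k$ vertices, precisely the number of leaves needed on the ``$k$-side'' of a $W_{2,5}$; symmetrically $N(x)\setminus\{y\}=S_{xy}\cup S_x$ has $5$ vertices.

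First I would prove the pointwise bound on $S_x$. Suppose for contradiction that some $u\in S_x$ has two distinct neighbours $a,b\in S_x\cup\overline{S[xy]}$. I would take $u$ as the degree-$3$ centre $v_1$, the vertex $x$ as the middle vertex $v_2$, and $y$ as the degree-$6$ centre $v_3$, using $a,b$ as the two leaves at $u$ and all five vertices of $N(y)\setminus\{x\}=S_{xy}\cup S_y$ as the leaves at $y$. The edges $ux$, $xy$, $ua$, $ub$, and the edges from $y$ to each of its five chosen leaves all exist by construction, so $(ab)u$-$x$-$y(S_{xy}\cup S_y)$ is a copy of $W_{2,5}$ as soon as all ten vertices are distinct. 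The main (and essentially only) point to verify is this distinctness: since the five sets $\{x,y\}$, $S_{xy}$, $S_y$, $S_x$ and $\overline{S[xy]}$ are pairwise disjoint, any $a,b\in S_x\cup\overline{S[xy]}$ are automatically distinct from $x$, from $y$, from $u\in S_x$ (being neighbours of $u$), and from every leaf in $S_{xy}\cup S_y$. This contradiction forces $|N(u)\cap(S_x\cup\overline{S[xy]})|\le1$ for every $u\in S_x$, and running the same argument with the roles of $x$ and $y$ exchanged yields $|N(v)\cap(S_y\cup\overline{S[xy]})|\le1$ for every $v\in S_y$.

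For the ``furthermore'' statement I would just sum the pointwise bound: each edge counted by $e[S_x,S_x\cup\overline{S[xy]}]$ is counted at least once in $\sum_{u\in S_x}|N(u)\cap(S_x\cup\overline{S[xy]})|$, so $e[S_x,S_x\cup\overline{S[xy]}]\le\sum_{u\in S_x}|N(u)\cap(S_x\cup\overline{S[xy]})|\le|S_x|$, and symmetrically $e[S_y,S_y\cup\overline{S[xy]}]\le|S_y|$. I expect the distinctness check in the second paragraph to be the only delicate point; everything else is a direct construction of the forbidden subgraph and a one-line double count.
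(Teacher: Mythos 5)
Your proposal is correct and follows essentially the same route as the paper: the paper likewise assumes some $u\in S_x$ has two neighbours $u_1,u_2\in S_x\cup\overline{S[xy]}$, notes $u_1,u_2\notin N[y]$, and exhibits $(u_1u_2)u$-$x$-$y(y_1\dots y_5)$ with $y_i\in N(y)\setminus\{x\}$ as a forbidden $W_{2,5}$, then sums the pointwise bound to get $e[S_x,S_x\cup\overline{S[xy]}]\le|S_x|$. Your explicit distinctness check and your remark that the degree-$7$ lemmas are unavailable here are just slightly more detailed versions of what the paper leaves implicit.
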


\noindent{\bf Proof of Claim \ref{fact-2}.}
If not, without loss of generality, we can suppose that there exists some $u\in S_x$ such that  $u_1,u_2\in N(u)\cap (S_x\cup \overline{S[xy]})$, then $u_1,u_2\notin N[y]$, and then  $(u_1u_2)u$-$x$-$y(y_1y_2\dots y_5)$ is a copy of  $W_{2,5}$, where $y_i\in N(y)$ for $1\le i\le 5$, a contradiction. So, $|N(u)\cap (S_x\cup \overline{S[xy]})|\le1$ for each $u\in S_x$, which implies $e[S_x,S_x\cup\overline{S[xy]}]\le |S_x|$.  \hfill $\Box$\medskip

Note that $|S[xy]|=12-|S_{xy}|$ and $|S_x|=|S_y|=5-|S_{xy}|$. By Lemma \ref{thm-edge}, $e[S_x\cup\{y\},S_y\cup\{x\}]\le2(|S_x|+1+|S_y|+1)-4= 4(5-|S_{xy}|)$. Since $\Delta(G)\le 6$, we have $e[S_{xy},S_x\cup S_y\cup \overline{S[xy]}]\le (6-2)|S_{xy}|=4|S_{xy}|$. Hence, by Claim  \ref{fact-2} and the induction hypothesis, we have
\begin{align*}
~~~~~~~~~~~e(G)&=e(G-{S[xy]})+e(G[S[xy]])+e[S[xy],\overline{S[xy]}]\\
	&=e(G-{S[xy]})+e[S_x\cup\{y\},S_y\cup\{x\}]+e[S_{xy},V(G)]\\
	&~~~+e[S_x,S_x\cup\overline{S[xy]}]+e[S_y,S_y\cup\overline{S[xy]}]\\
	&\leq \frac{17}{6}(n-12+|S_{xy}|)+4(5-|S_{xy}|)+6|S_{xy}|+2(5-|S_{xy}|)\\
	&=\frac{17}{6}n+\frac{17}{6}|S_{xy}|-4.~~~~~~~~~~~~~~~~~~~~~~~~~~~~~~~~~~~~~~~~~~~~~~~~~~~~~~~~	~~(10)
\end{align*}
If $|S_{xy}|\le 1$, then by $(10)$, $e(G)< \frac{17}{6}n$. So, in the remainder, we can assume that $2\le|S_{xy}|\le5$. Then $|S_y|\le 5-|S_{xy}|\le 3$.\medskip

Denote $S':=(N_2(x)\cup N_2(y))\cap \overline{S[xy]}$ and $S'_4:=\{ v\in S' \ | \ d(v)=4\}$.

\begin{claim}\label{fact-n}
	Let $v\in S'$. Then we have the following:
	
(i)  ~~~$|N(v)\cap {S_{xy}}|\le 2$;

(ii)  ~~$|N(v)\cap{\overline{S[xy]}}|\le1$;

(iii) ~If $N(v)\cap S_{xy}=\emptyset$, then $|N(v)\cap S_x|\ge 2$ or $|N(v)\cap S_y|\ge 2$;

(iv) ~If $|N(v)\cap S_{xy}|=1$, then $|N(v)\cap S_{x}|=|N(v)\cap S_{y}|=1$;

(v) ~~If $v\in N_2(x)$, then $|N(v)\cap S_y|\le 1$; and if $v\in N_2(y)$,  then $|N(v)\cap S_x|\le 1$. 
\end{claim}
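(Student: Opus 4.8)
The plan is to reduce all five items to a single recurring observation about vertices of $N_2(x)$, supplemented by planarity ($K_{3,3}$-freeness) for part (i). Fix the $6$-$6$ edge $xy$. First note that $v\in S'\subseteq\overline{S[xy]}$ means $v\notin N[x]\cup N(y)$, so $v$ is adjacent to neither $x$ nor $y$; hence $N(v)$ splits into the four pieces $N(v)\cap S_{xy}$, $N(v)\cap S_x$, $N(v)\cap S_y$, $N(v)\cap\overline{S[xy]}$, and $d(v)\ge\delta(G)\ge 3$. Since $\Delta(G)\le 6<h+k=7$, Lemma~\ref{obs-xyz} no longer applies, so I would establish the following weaker substitute $(\ast)$: \emph{if $v\in N_2(x)$, then $v$ has at most one neighbour outside $N[x]$} (and symmetrically for $y$). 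To prove $(\ast)$, suppose $b_1,b_2\in N(v)\setminus N[x]$ are distinct; pick $a\in N(x)\cap N(v)$, which exists as $v\in N_2(x)$. Then $v,a,x$ form a genuine $P_3$ and $a\ne b_1,b_2$ (since $a\in N(x)$ while $b_i\notin N[x]$). As $d(x)=6$, the set $N(x)\setminus\{a\}$ gives exactly $5$ further neighbours of $x$, all in $N(x)$ and so distinct from $b_1,b_2,v$. Thus $(b_1b_2)v$-$a$-$x(N(x)\setminus\{a\})$ is a copy of $W_{2,5}$, a contradiction. This uses precisely the $k=5$ leaves available at $x$ and the $h=2$ leaves at $v$.

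Parts (v) and (ii) then follow at once. Both $S_y$ and $\overline{S[xy]}$ are disjoint from $N[x]$, so if $v\in N_2(x)$ then $N(v)\cap S_y\subseteq N(v)\setminus N[x]$ and $N(v)\cap\overline{S[xy]}\subseteq N(v)\setminus N[x]$, each of size at most $1$ by $(\ast)$; for (ii), when instead $v\in N_2(y)$ one invokes the symmetric form of $(\ast)$. Part (i) is the one genuinely different ingredient: if $c_1,c_2,c_3\in N(v)\cap S_{xy}$ were three distinct common neighbours, then $\{x,y,v\}$ and $\{c_1,c_2,c_3\}$ would span a $K_{3,3}$, which is forbidden in a planar graph.

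For (iii) and (iv) I would combine $(\ast)$ with the degree count $d(v)\ge 3$. In (iii), assume $N(v)\cap S_{xy}=\emptyset$ and, for contradiction, $|N(v)\cap S_x|\le 1$ and $|N(v)\cap S_y|\le 1$; together with (ii) this forces $d(v)=3$ with exactly one neighbour in each of $S_x,S_y,\overline{S[xy]}$. The $S_x$-neighbour witnesses $v\in N_2(x)$, yet the $S_y$- and $\overline{S[xy]}$-neighbours both lie outside $N[x]$, contradicting $(\ast)$. In (iv), a neighbour $c\in N(v)\cap S_{xy}\subseteq N(x)\cap N(y)$ puts $v$ in both $N_2(x)$ and $N_2(y)$; applying $(\ast)$ to $x$ yields $|N(v)\cap S_y|+|N(v)\cap\overline{S[xy]}|\le 1$ and to $y$ yields $|N(v)\cap S_x|+|N(v)\cap\overline{S[xy]}|\le 1$, while $d(v)\ge 3$ gives $|N(v)\cap S_x|+|N(v)\cap S_y|+|N(v)\cap\overline{S[xy]}|\ge 2$. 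These force the $\overline{S[xy]}$-term to vanish and leave exactly one neighbour in each of $S_x$ and $S_y$.

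The main obstacle is bookkeeping rather than ideas: in each $W_{2,5}$-construction one must verify that the two chosen leaves of $v$ are distinct from the path vertices $a,x$ and from the five leaves of $x$, which is exactly why those leaves must be taken outside $N[x]$. The only subtle point is that (iv) genuinely requires the combined inequalities coming from $(\ast)$ (bounding $N(v)\cap S_y$ and $N(v)\cap\overline{S[xy]}$ \emph{jointly}), rather than the two separate bounds supplied by (v) and (ii); this is precisely where the strengthened form ``at most one neighbour outside $N[x]$'' is indispensable.
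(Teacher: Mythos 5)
Your proof is correct and follows essentially the same route as the paper: every part of the paper's argument is an instance of your observation $(\ast)$ (the copy $(b_1b_2)v$-$a$-$x(N(x)\setminus\{a\})$ is exactly the paper's recurring $W_{2,5}$ construction), combined with the same $K_{3,3}$ argument for (i) and the same degree-counting for (iii) and (iv). Packaging the construction once as $(\ast)$ is a tidier presentation, but not a different method.
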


\noindent{\bf Proof of Claim \ref{fact-n}.} (i)  If $|S_{xy}|\le 2$, then $|N(v)\cap {S_{xy}}|\le 2$ clearly. So we assume $|S_{xy}|\ge 3$. If $|N(v)\cap {S_{xy}}|\ge 3$, then there is a $K_{3,a}$ with the bipartition $\{x,y,v\}$ and  $N(v)\cap S_{xy}$, where $a:=|N(v)\cap {S_{xy}}|$, a contradiction. Therefore, $|N(v)\cap {S_{xy}}|\le 2$.
\medskip

(ii) Suppose that $|N(v)\cap{\overline{S[xy]}}|\ge2$ for some $v\in S'$, say $v_1,v_2\in N(v)\cap {\overline{S[xy]}}$. Since $v\in S'$, we can assume, without loss of generality, that $x_1\in N(v)\cap N(x)$. Note that $v_1, v_2\notin N[x]$, and then $(v_1v_2)v$-$x_1$-$x(yx_2\dots x_5)$ is a copy of  $W_{2,5}$, where $x_i\in N(x)\setminus\{y\}$ for $2\le i\le 5$, a contradcition. \medskip

(iii) If not, then $|N(v)\cap S_x|\le 1$ and $|N(v)\cap S_y|\le1$ for $v\in S^{'}$. Since $N(v)\cap S_{xy}=\emptyset$ and $|N(v)\cap{\overline{S[xy]}}|\le 1$ by (ii), we have $$3\le d(v)\le |N(v)\cap S_x|+|N(v)\cap S_y|+|N(v)\cap{\overline{S[xy]}}|\le 1+1+1=3$$
which implies that all the above inequalities should be equal, that is  $|N(v)\cap S_x|=|N(v)\cap S_y|=|N(v)\cap{\overline{S[xy]}}|=1$. Let $x_1\in N(v)\cap S_x$, $y_1\in N(v)\cap S_y$ and $w\in N(v)\cap{\overline{S[xy]}}$. Then
 $(y_1w)v$-$x_1$-$x(yx_2\dots x_5)$ is a copy of  $W_{2,5}$, where $x_i\in N(x)$ for $2\le i\le 5$, a contradiction.\medskip

(iv) Let $N(v)\cap S_{xy}=\{w\}$. We first show that $|N(v)\cap S_x|\le 1$, for otherwise, there are two vertices $x_1,x_2\in N(v)\cap S_x$ such that $(x_1x_2)v$-$w$-$y(xy_1\dots x_4)$ is a copy of  $W_{2,5}$, where $y_i\in N(x)$ for $1\le i\le 4$, a contradiction. Similarly, $|N(v)\cap S_y|\le 1$. By (ii), $|N(v)\cap{\overline{S[xy]}}|\le1$. If $N(v)\cap S_x=\emptyset$, then $$3\le \delta(G)\le d(v)=|N(v)\cap{\overline{S[xy]}}|+|N(v)\cap S_y|+|N(v)\cap S_{xy}|\le 1+1+1=3,$$ which implies that the above inequalities should be equal, i.e. $|N(v)\cap{\overline{S[xy]}}|=1$ and $|N(v)\cap S_y|=1$. Let  $v_1\in N(v)\cap{\overline{S[xy]}}$ and $y_1\in N(v)\cap S_y$, then $v_1,y_1\notin S_x$, and then $(v_1y_1)v$-$w$-$x(yx_1\dots x_4)$ is a copy of  $W_{2,5}$, where $x_j\in N(x)$ for $1\le j\le 4$, a contradiction. Thus, $|N(v)\cap S_x|=1$. Similarly, $|N(v)\cap S_y|=1$.\medskip

(v) By symmetry, we let $x_1\in N(v)\cap N(x)$. If $|N(v)\cap S_y|\ge 2$, say $y',y''\in N(v)\cap S_y$, then $y',y''\notin N[x]\setminus\{x_1\}$, and then $(y'y'')v$-$x_1$-$x(yx_2\dots x_5)$ is a copy of  $W_{2,5}$, where $x_i\in N(x)$ for $2\le i\le 5$, a contradiction. So $|N(v)\cap S_y|\le 1$.  \hfill $\Box$

\begin{claim}\label{fact-s251}
$d(v)\le 4$ for all $v\in S'$. Moreover, $|S'_4|\le |S_y|$.
\end{claim}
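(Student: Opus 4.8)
The plan is to control the neighbourhood of a vertex $v\in S'$ through the four disjoint sets into which $N(v)$ falls. Write $a=|N(v)\cap S_{xy}|$, $b=|N(v)\cap S_x|$, $c=|N(v)\cap S_y|$ and $e=|N(v)\cap\overline{S[xy]}|$, so that $d(v)=a+b+c+e$ (note $v$ is adjacent to neither $x$ nor $y$, since $v\in\overline{S[xy]}$). From Claim~\ref{fact-n} I already have $a\le2$ by (i) and $e\le1$ by (ii), together with (iii), (iv), (v). I would also exploit the fact that $|S_x|=|S_y|=5-|S_{xy}|\le3$ (because $d(x)=d(y)=6$ and $|S_{xy}|\ge2$), which immediately gives $b\le3$ and $c\le3$.

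First I would prove a strengthening of (v): if $v\in N_2(x)$ then $v$ has at most one neighbour outside $N[x]$, that is $c+e\le1$; symmetrically $v\in N_2(y)$ forces $b+e\le1$. Indeed, if $v$ had a neighbour $x_1\in N(x)$ and two neighbours $p,q\notin N[x]$, then $(pq)v$-$x_1$-$x(\cdots)$ is a copy of $W_{2,5}$, using the five remaining neighbours $N(x)\setminus\{x_1\}$ of the degree-$6$ vertex $x$ as the leaves of $v_3=x$; these lie in $N(x)$ and hence avoid $p,q,v\notin N(x)$. With this in hand, $d(v)\le4$ follows by a short case analysis on $a$. If $a=0$ then (iii) gives, say, $b\ge2$, so $v\in N_2(x)$ and $c+e\le1$, whence $d(v)=b+(c+e)\le3+1=4$; if $a=1$ then (iv) gives $b=c=1$ and $d(v)=3+e\le4$; and if $a=2$ then $v\in N_2(x)\cap N_2(y)$, so $b+e\le1$ and $c+e\le1$ yield $d(v)=2+(b+e)+(c+e)-e\le4$.

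For the bound $|S_4'|\le|S_y|$ the key tool is a second $W_{2,5}$ argument, which I would establish first: \emph{every vertex of $S_y$ has at most one neighbour in $\overline{S[xy]}$} (and symmetrically for $S_x$). For if $y_j\in S_y$ had two neighbours $v,v'\in\overline{S[xy]}$, then $(vv')y_j$-$y$-$x(N(x)\setminus\{y\})$ would be a $W_{2,5}$, since $x$ has five neighbours besides $y$, all in $N(x)$ and hence distinct from $y_j,v,v'$. Consequently $e[S_y,\overline{S[xy]}]\le|S_y|$. Next I would classify the vertices of $S_4'$ by the value of $a$, using the refined inequalities above. The only degree-$4$ type with $c=0$ turns out to be $a=0,\,b=3,\,c=0,\,e=1$: indeed $a=0,\,b\ge2,\,c\ge1$ is impossible, as $c\ge1$ would place $v\in N_2(y)$ and force $b+e\le1$; while every other degree-$4$ vertex has $c\ge1$. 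Hence, when no such exceptional vertex occurs, $|S_4'|\le\sum_{v\in S_4'}c=e[S_4',S_y]\le e[\overline{S[xy]},S_y]\le|S_y|$.

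The main obstacle is precisely the exceptional type $v^{*}$ with $b=3$, which has \emph{no} neighbour in the target set $S_y$ and which forces $|S_x|=3$, $|S_{xy}|=2$, $|S_y|=3$. Here I would argue that $v^{*}$ is adjacent to all three vertices of $S_x$, so by the $S_x$-version of the ``at most one neighbour in $\overline{S[xy]}$'' fact, each vertex of $S_x$ has $v^{*}$ as its unique neighbour in $\overline{S[xy]}$; thus no other vertex of $S'$ meets $S_x$, i.e.\ $b=0$ for every $v\in S_4'\setminus\{v^{*}\}$. The only degree-$4$ type with $b=0$ is $a=0,\,c=3$, and at most one such vertex can exist (two of them would both be adjacent to all of $S_y$, violating the $S_y$-bound). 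Therefore $|S_4'|\le2\le3=|S_y|$. I expect this exceptional-vertex bookkeeping to be the delicate part, both because the natural map $S_4'\to S_y$ breaks down exactly there and because it is where the constants are tight.
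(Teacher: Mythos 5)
Your proof is correct and follows essentially the same route as the paper: the same decomposition of $N(v)$ into $S_{xy}$, $S_x$, $S_y$, $\overline{S[xy]}$, the same appeals to Claims~\ref{fact-2} and~\ref{fact-n}, and the same classification of the degree-$4$ types $(0,3,0,1)$, $(0,0,3,1)$ and those meeting $S_y$. If anything, your bookkeeping for $|S_4'|\le|S_y|$ --- showing the exceptional $b=3$ vertex monopolizes $S_x$ and hence coexists with at most one further degree-$4$ vertex, necessarily of type $c=3$ --- is spelled out more carefully than the paper's rather terse concluding step.
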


\noindent{\bf Proof of Claim \ref{fact-s251}.}
Let $v\in S'$. First we assume $N(v)\cap {S_{xy}}=\emptyset$. If $N(v)\cap S_x=\emptyset$ or $N(v)\cap S_y=\emptyset$, then $|N(v)\cap S_x|+|N(v)\cap S_y|\le |S_y|=|S_x|\le3$; and if $N(v)\cap S_x\not=\emptyset$ and $N(v)\cap S_y\not=\emptyset$, that is $v\in N_2(x)\cap N_2(y)$, then by Claim \ref{fact-n}(v), $|N(v)\cap S_y|\le 1$ and $|N(v)\cap S_x|\le 1$. Hence, by Claim \ref{fact-n}(ii),
\begin{align*}
	~~~d_G(v)&=|N(v)\cap S_x|+|N(v)\cap S_y|+|N(v)\cap \overline{S[xy]}|\\
	&\le
	\left\{
	\begin{array}{ll}
		3+1=4, &  \text{if } v\in N_2(x)\setminus N_2(y) \text{~or } v\in N_2(y)\setminus N_2(x);\tag{11}  \\
		1+1+1=3,  &  \text{if } v\in N_2(x)\cap N_2(y).
	\end{array}
	\right.
\end{align*}
Next we assume that $N(v)\cap {S_{xy}}\not=\emptyset$. Then by Claim \ref{fact-n}(i), $|N(v)\cap {S_{xy}}|\le 2$. If $N(v)\cap{\overline{S[xy]}}=\emptyset$, then by Claim \ref{fact-2}, $|N(v)\cap S_x|\le 1$ and $|N(v)\cap S_y|\le 1$, and thus

$$d(v)=|N(v)\cap {S_{xy}}|+|N(v)\cap S_x|+|N(v)\cap S_y|\le 2+1+1=4;\eqno(12)$$
and if $N(v)\cap{\overline{S[xy]}}\not=\emptyset$, then by Claim \ref{fact-2}, $|N(v)\cap{\overline{S[xy]}}|\le 1$, and then $ N(v)\cap (S_x\cup S_y)=\emptyset$. Hence $d(v)=|N(v)\cap {S_{xy}}|+|N(v)\cap \overline{S[xy]}|\le 2+1=3.$ 

Moreover, if $d(v)=4$, then by (11), $|N(v)\cap S_x|=3$ and $N(v)\cap S_y=\emptyset$, or $|N(v)\cap  S_y|=3$ and $N(v)\cap S_x=\emptyset$, see Figure \ref{f-d(v)=4}(i), that is, $|S_4^{'}|\le 2<3=|S_y|$; and by (12), $|N(v)\cap S_x|=|N(v)\cap S_y|=1$, see Figure \ref{f-d(v)=4}(ii). Hence $|S_4^{'}|\le|S_y|$ .\hfill $\Box$

\begin{figure}[ht]\medskip
	\centering
	\subfigure{
		\begin{minipage}[t]{0.3\textwidth}
			\centering
			\begin{tikzpicture}[scale=0.7]
				\tikzstyle{vertex}=[draw,circle,fill=black,minimum size=6,inner sep=0]		
				
				\node[vertex] (k) at
				(-1.3,-0.5){};
				\node[vertex] (l) at
				(3.7,-0.5){};
				\node[vertex] (e) at
				(3.7,0){};
				\node[vertex] (f) at
				(3.7,-1){};
				\node[vertex] (d) at (-1.3,-1) {};
				\node[vertex] (c) at (-1.3,0){};
				\node[vertex] (b) at (1.2,0) {};
				\node[vertex] (a) at (1.2,1){};
				\node[vertex] (g) at (0,2.8){};
				\node[vertex] (h) at (2.4,2.8){};
				\node[vertex] (i) at (0,4.0){};
				\node[vertex] (j) at (2.4,4.0){};
				\node[vertex] (x) at (0,-1)[label={[xshift=-6pt, yshift=-22pt] $x$}] {};
				\node[vertex] (y) at (2.4,-1)[label={[xshift=6pt, yshift=-22pt] $y$}] {};
				
				\draw (-2.4,-0.5) node[right=0pt]{$S_x$};
				\draw (3.7,-0.5) node[right=0pt]{$S_y$};
				\draw (0.6,1.4) node[right=0pt]{$S_{xy}$};
				\draw (3.5,2.8) node[right=0pt]{$S^{'}$};
				\draw (0,2.8) node[right=0pt]{$v$};
				\draw (2.4,2.8) node[right=0pt]{$v$};
				
				\draw[ultra thick] (x) -- (a);
				\draw[ultra thick] (x) -- (b);
				\draw[ultra thick] (x) -- (c);
				\draw[ultra thick] (x) -- (d);
				\draw[ultra thick] (x) -- (k);
				\draw[ultra thick] (y) -- (a);
				\draw[ultra thick] (y) -- (b);
				\draw[ultra thick] (y) -- (e);
				\draw[ultra thick] (y) -- (f);
				\draw[ultra thick] (y) -- (l);
				\draw[ultra thick] (x) -- (y);
				\draw[dashed, red] (g) -- (i);
				\draw[dashed, blue] (h) -- (j);
				\draw[red] (90:2.8cm) arc (90:216:1.7cm);
				\draw[red] (90:2.8cm) arc (90:225:1.9cm);
				\draw[red] (90:2.8cm) arc (90:230:2.1cm);
				\draw[blue,shift={(2.4,0)}] (0:1.3cm) arc (0:50:3.7cm);
				\draw[blue,shift={(2.4,0)}] (-40:1.64cm) arc (-45:80:2.3cm);
				\draw[ blue,shift={(2.4,0)}] (-20:1.4cm) arc (-20:65:2.7cm);
				
				\draw[thick, dashed,] (0.6,-0.5)rectangle(1.8,1.8);
				\draw[thick, dashed, blue] (-0.9,0.5)rectangle(-2.2,-1.5);
				\draw[thick, dashed, red] (4.5,0.5)rectangle(3.2,-1.5);
				\draw[thick, dashed,] (-2.0,2.1)rectangle(4.3,3.6);
			\end{tikzpicture}
			
			\centerline{(i) $N(v)\cap {S_{xy}}=\emptyset$}
			
		\end{minipage}
	}
	\hspace{2cm}
	\centering
	\subfigure{
		\begin{minipage}[t]{0.3\textwidth}
			\centering
			\begin{tikzpicture}[scale=0.7]
				\tikzstyle{vertex}=[draw,circle,fill=black,minimum size=6,inner sep=0]		
				
				\node[vertex] (g) at
				(2.4,2.7){};
				\node[vertex] (e) at
				(4.9,0){};
				\node[vertex] (f) at
				(4.9,-1){};
				\node[vertex] (d) at (-0.1,-1) {};
				\node[vertex] (c) at (-0.1,0){};
				\node[vertex] (b) at (2.4,0) {};
				\node[vertex] (a) at (2.4,1){};
				\node[vertex] (x) at (1.2,-1)[label={[xshift=-6pt, yshift=-22pt] $x$}] {};
				\node[vertex] (y) at (3.6,-1)[label={[xshift=6pt, yshift=-22pt] $y$}] {};
				\draw (-0.9,-0.5) node[right=0pt]{$S_x$};
				\draw (4.9,-0.5) node[right=0pt]{$S_y$};
				\draw (1.8,1.4) node[right=0pt]{$S_{xy}$};
				\draw (4.7,2.8) node[right=0pt]{$S^{'}$};
				\draw (2.4,2.8) node[right=0pt]{$v$};
				\draw[fill=black] (-0.1,-0.3) circle (0.8pt);
				\draw[fill=black] (-0.1,-0.5) circle (0.8pt);
				\draw[fill=black] (-0.1,-0.7) circle (0.8pt);
				\draw[fill=black] (4.9,-0.3) circle (0.8pt);
				\draw[fill=black] (4.9,-0.5) circle (0.8pt);
				\draw[fill=black] (4.9,-0.7) circle (0.8pt);
				\draw[fill=black] (2.4,0.3) circle (0.8pt);
				\draw[fill=black] (2.4,0.5) circle (0.8pt);
				\draw[fill=black] (2.4,0.7) circle (0.8pt);
				
				\draw[ultra thick] (x) -- (a);
				\draw[ultra thick] (x) -- (b);
				\draw[ultra thick] (x) -- (c);
				\draw[ultra thick] (x) -- (d);
				\draw[ultra thick] (y) -- (a);
				\draw[ultra thick] (y) -- (b);
				\draw[ultra thick] (y) -- (e);
				\draw[ultra thick] (y) -- (f);
				\draw[ultra thick] (x) -- (y);
				\draw[red] (g) -- (c);
				\draw[red] (g) -- (e);
				\draw[red] (g) -- (a);
				\draw[thick, dashed,] (1.8,-0.5)rectangle(3.0,1.8);
				\draw[thick, dashed, blue] (0.5,0.5)rectangle(-0.8,-1.5);
				\draw[thick, dashed, red] (5.5,0.5)rectangle(4.2,-1.5);
				\draw[thick, dashed,] (-0.8,2.1)rectangle(5.5,3.6);
				\draw[red,shift={(1.4,1.4)}] (-55:1.7cm) arc (-55:55:1.6cm);
			\end{tikzpicture}
			{(ii) $N_{S_{xy}}(v)\neq\emptyset$}	
		\end{minipage}
	}			
	\caption{The cases when $d(v)=4$}\label{f-d(v)=4}
\end{figure}
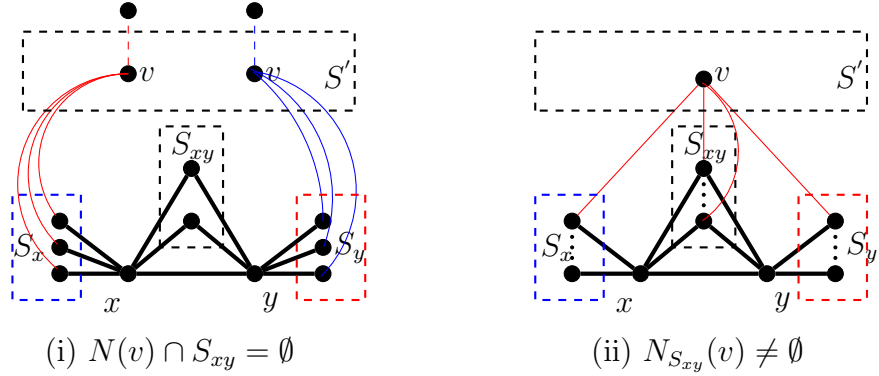
	\begin{claim}\label{fact-s252}
$|S^{'}|\le 12-\frac{3}{2}|S_y|$.
	\end{claim}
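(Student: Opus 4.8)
The plan is to bound $|S'|$ by double counting the edges between $S'$ and the ``core'' $C:=S_{xy}\cup S_x\cup S_y$, for which $|S_x|=|S_y|=5-|S_{xy}|$ and hence $|C|=|S_{xy}|+2|S_y|$. For the lower bound on $e[S',C]$, note that every $v\in S'$ lies in $\overline{S[xy]}$, so $v$ is adjacent to neither $x$ nor $y$; thus all neighbors of $v$ lie in $C\cup\overline{S[xy]}$. By Claim~\ref{fact-n}(ii) we have $|N(v)\cap\overline{S[xy]}|\le 1$, whence $|N(v)\cap C|\ge d(v)-1\ge\delta(G)-1\ge 2$. Summing over $v\in S'$ gives $e[S',C]\ge 2|S'|$.

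For the matching upper bound I count from the $C$ side. Each $w\in S_{xy}$ is adjacent to both $x$ and $y$, so by Assumption~\ref{d=6}, $|N(w)\cap S'|\le d(w)-2\le 4$, giving $e[S',S_{xy}]\le 4|S_{xy}|$. For $u\in S_x$, Claim~\ref{fact-2} states $|N(u)\cap(S_x\cup\overline{S[xy]})|\le 1$; since $S'\subseteq\overline{S[xy]}$ this forces $|N(u)\cap S'|\le 1$, so $e[S',S_x]\le|S_x|$, and symmetrically $e[S',S_y]\le|S_y|$. Adding these, $e[S',C]\le 4|S_{xy}|+|S_x|+|S_y|=4|S_{xy}|+2|S_y|$.

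Combining the two estimates yields $2|S'|\le 4|S_{xy}|+2|S_y|$, i.e. $|S'|\le 2|S_{xy}|+|S_y|=10-|S_y|$ after substituting $|S_{xy}|=5-|S_y|$. Since we are in the case $|S_y|\le 3$, we have $10-|S_y|\le 12-\tfrac{3}{2}|S_y|$ (the gap equals $2-\tfrac12|S_y|>0$), which establishes $|S'|\le 12-\tfrac{3}{2}|S_y|$ and in fact a slightly stronger bound.

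The only real obstacle is ensuring the per-vertex degree bounds are legitimately supplied by the earlier claims rather than assumed: I need that each vertex of $S'$ has no neighbor in $\{x,y\}$ and at most one neighbor in $\overline{S[xy]}$ (this is where $S'\subseteq\overline{S[xy]}$ together with Claim~\ref{fact-n}(ii) is used), and that each vertex of $S_x\cup S_y$ sends at most one edge into $\overline{S[xy]}$ (Claim~\ref{fact-2}). Once these are in place the double count is purely arithmetic, so I would not expect any difficulty beyond verifying that the invoked claims indeed apply to the fixed $6$-$6$ edge $xy$.
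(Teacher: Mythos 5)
Your proof is correct, and it takes a genuinely different route from the paper's. The paper partitions $S'$ into $S_0',S_1',S_2'$ according to $|N(v)\cap S_{xy}|\in\{0,1,2\}$, bounds $|S_0'|\le 2$ and $|S_1'|\le |S_y|$ via parts (iii) and (iv) of Claim~\ref{fact-n}, and combines these with the count $|S_1'|+2|S_2'|\le e[S_{xy},S']\le 4|S_{xy}|$ to arrive at exactly $|S'|\le 2+2|S_{xy}|+\frac{1}{2}|S_y|=12-\frac{3}{2}|S_y|$. You instead double count $e[S',C]$ for $C=S_{xy}\cup S_x\cup S_y$: the lower bound $2|S'|$ uses only $\delta(G)\ge 3$, the fact that no vertex of $S'$ is adjacent to $x$ or $y$, and Claim~\ref{fact-n}(ii), while the upper bound $4|S_{xy}|+|S_x|+|S_y|$ uses only Assumption~\ref{d=6} and Claim~\ref{fact-2}. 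This bypasses parts (iii) and (iv) of Claim~\ref{fact-n} entirely and yields the strictly stronger estimate $|S'|\le 2|S_{xy}|+|S_y|=10-|S_y|$, which dominates $12-\frac{3}{2}|S_y|$ whenever $|S_y|\le 4$ and in particular throughout the relevant range $|S_y|\le 3$. All the ingredients you invoke are legitimately available at this point of the argument (the $6$-$6$ edge $xy$ is fixed, $\delta(G)\ge 3$ and $\Delta(G)\le 6$ are in force), so the double count is sound; your sharper bound would even improve the final estimate of $e(G)$ slightly, though the paper's weaker bound already suffices there.
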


\noindent{\bf Proof of Claim \ref{fact-s252}.}
By Claim \ref{fact-n}(i), we have $|N(v)\cap S_{xy}|\le2$ for any $v\in S^{'}$. Let
\begin{align*}
	&S_0^{'}=\{v\ | \ v\in S^{'}\ \text{and} \ N(v)\cap S_{xy}=\emptyset \}, \\
	&S_1^{'}=\{v\ | \ v\in S^{'}\ \text{and} \ |N(v)\cap S_{xy}|=1\}, \\
	&S_2^{'}=\{v\ | \ v\in S^{'}\ \text{and} \ |N(v)\cap S_{xy}|=2\},
\end{align*}
 then $S^{'}= S_0^{'}\cup S_1^{'}\cup S_2^{'}$ and  $|S^{'}|= |S_0^{'}|+| S_1^{'}|+| S_2^{'}|$.

If $v\in S_0^{'}$, then by Claim \ref{fact-n}(iii), $|N(v)\cap S_x|\ge2$ and $|N(v)\cap S_y|\ge2$, which implies that $|S_0^{'}|\le2$ as $|S_x|=|S_y|=5-|S_{xy}|\le3$.

If $v\in S_1^{'}$, then by Claim \ref{fact-n}(iv), $|N(v)\cap S_{x}|=|N(v)\cap S_{y}|=1$, and thus $|S_1^{'}|\le |S_y|=|S_x|$.

On the other hand, $|S_1^{'}|+2|S_2^{'}|\le e(S_{xy},S^{'})\le (6-2)|S_{xy}|=4|S_{xy}|.$ So, $$|S_1^{'}|+|S_2^{'}|\le 2|S_{xy}|+\frac{1}{2}|S_1^{'}|\le 2|S_{xy}|+\frac{1}{2}|S_y|.$$ Hence, $|S^{'}|\le 2+2|S_{xy}|+\frac{1}{2}|S_y|=2+2(5-|S_{xy}|)+\frac{1}{2}|S_y|=12-\frac{3}{2}|S_y|.$
\hfill $\Box$\medskip

By Claim \ref{fact-s251}, we have $e[S^{'},\overline{S^{'}}]\le 3(|S^{'}|-|S_4^{'}|)+ 4|S_4^{'}|=3|S^{'}|+|S_4^{'}|\le 3|S^{'}|+ |S_y|.$ Since $G[S[xy]]$ is planar, $ e(G[S[xy]])\le 3(7+|S_y|)-6=15+3|S_y|$ by Lemma \ref{thm-edge}.
Note that $|S[xy]|=7+|S_y|$ and $|S_y|\le 3$, and hence, by the induction hypothesis, together with  Claim \ref{fact-s252}, we have
	\begin{align*}
		e(G)
		&=e(G-({S[xy]\cup S^{'}}))+e(G[S[xy]])+e[S^{'},\overline{S^{'}}]\\
		&\le \frac{17}{6}(n-7-|S_y|-|S^{'}|)+(15+3|S_y|)+(3|S^{'}|+ |S_y|) \\
		&=\frac{17}{6}n+\frac{|S^{'}|+7|S_y|-29}{6}\\
		&\le  \frac{17}{6}n+\frac{\frac{11}{2}|S_y|-17}{6}\le\frac{17}{6}n-\frac{1}{12} <\frac{17}{6}n.
	\end{align*}

This complete the proof of $\ex_{\p}(n,W_{2,5})\le\frac{17}{6}n$.

\bibliographystyle{abbrv}
\bibliography{myy2}
\end{sloppypar}	
\end{document}